\definecolor{labelkey}{HTML}{0455BF}
\definecolor{refkey}{rgb}{0,0.6,0.0}
\definecolor{dblue}{HTML}{0455BF}
\definecolor{dgreen}{HTML}{02724A}
\definecolor{myellow}{HTML}{D97904}
\definecolor{dred}{HTML}{D90404}
\renewcommand{\leq}{\ensuremath{\leqslant}}
\renewcommand{\geq}{\ensuremath{\geqslant}}
\newcommand{\Argmind}[2]{\ensuremath{%
\underset{\substack{#1}}{\text{\rm Argmin}}\;\;#2}}
\newcommand{\minmax}[3]{\ensuremath{%
\underset{\substack{#1}}{\text{\rm minimize}}\;\:
\underset{\substack{#2}}{\text{\rm maximize}}\;\;#3}}
\newcommand{\minimize}[2]{\ensuremath{\underset{\substack{{#1}}}%
{\text{\rm minimize}}\;\;#2}}
\newcommand{\Scal}[2]{\bigg\langle{#1}\;\bigg|\:{#2}\bigg\rangle} 
\newcommand{\scal}[2]{{\langle{{#1}\mid{#2}}\rangle}}
\newcommand{\sscal}[2]{{\big\langle{{#1}\mid{#2}}\big\rangle}}
\newcommand{\menge}[2]{\big\{{#1}~|~{#2}\big\}} 
\newcommand{\Menge}[2]{\left\{{#1}~\middle|~{#2}\right\}} 
\newcommand{\lag}{\ensuremath{\boldsymbol{\mathsf{A}}}}
\newcommand{\KT}{\ensuremath{\mathsf{Z}}}
\newcommand{\GGG}{\ensuremath{\boldsymbol{\mathcal{G}}}}
\newcommand{\HHH}{\ensuremath{\boldsymbol{\mathcal{H}}}}
\newcommand{\UUU}{\ensuremath{\boldsymbol{\mathcal{U}}}}
\newcommand{\VVV}{\ensuremath{\boldsymbol{\mathcal{V}}}}
\newcommand{\KKK}{\ensuremath{\boldsymbol{\mathcal{K}}}}
\newcommand{\XXX}{\ensuremath{\boldsymbol{\mathsf{X}}}}
\newcommand{\HH}{\ensuremath{{\mathcal{H}}}}
\newcommand{\GG}{\ensuremath{{\mathcal{G}}}}
\newcommand{\KK}{\ensuremath{{\mathcal{K}}}}
\newcommand{\NN}{\ensuremath{\mathbb{N}}}
\newcommand{\pnabla}[1]{\ensuremath{\nabla_{\!#1}}}
\newcommand{\Sum}{\ensuremath{\displaystyle\sum}}
\newcommand{\emp}{\ensuremath{\varnothing}}
\newcommand{\Id}{\ensuremath{\mathrm{Id}}}
\newcommand{\RR}{\ensuremath{\mathbb{R}}}
\newcommand{\RP}{\ensuremath{\left[0,{+}\infty\right[}}
\newcommand{\RPP}{\ensuremath{\left]0,{+}\infty\right[}}
\newcommand{\RPX}{\ensuremath{\left[0,{+}\infty\right]}}
\newcommand{\RX}{\ensuremath{\left]{-}\infty,{+}\infty\right]}}
\newcommand{\weakly}{\ensuremath{\rightharpoonup}}
\newcommand{\ran}{\ensuremath{\text{\rm ran}\,}}
\newcommand{\zer}{\text{\rm zer}\,}
\newcommand{\pinf}{\ensuremath{{{+}\infty}}}
\newcommand{\dom}{\ensuremath{\text{\rm dom}\,}}
\newcommand{\prox}{\ensuremath{\text{\rm prox}}}
\newcommand{\gra}{\ensuremath{\text{\rm gra}\,}}
\newcommand{\sri}{\ensuremath{\text{\rm sri}\,}}
\newcommand{\zeroun}{\ensuremath{\left]0,1\right[}}
\def\abstract{\noindent{\bfseries Abstract}. \ignorespaces}
\newtheorem{theorem}{Theorem}[section]
\newtheorem{lemma}[theorem]{Lemma}
\newtheorem{proposition}[theorem]{Proposition}
\theoremstyle{plain}{\theorembodyfont{\rmfamily}%
}
\theoremstyle{plain}{\theorembodyfont{\rmfamily}%
\newtheorem{example}[theorem]{Example}}
\theoremstyle{plain}{\theorembodyfont{\rmfamily}%
\newtheorem{remark}[theorem]{Remark}}
\theoremstyle{plain}{\theorembodyfont{\rmfamily}%
}
\theoremstyle{plain}{\theorembodyfont{\rmfamily}%
}
\theoremstyle{plain}{\theorembodyfont{\rmfamily}%
\newtheorem{definition}[theorem]{Definition}}
\theoremstyle{plain}{\theorembodyfont{\rmfamily}%
\newtheorem{problem}[theorem]{Problem}}
\theoremstyle{plain}{\theorembodyfont{\rmfamily}%
}
\setlist[enumerate]{itemsep=2pt}
\setlist[itemize]{itemsep=2pt}
\numberwithin{equation}{section}
\newcommand*\mute{{\mkern 2mu\cdot\mkern 2mu}}
\newcommand{\email}[1]{\href{mailto:#1}{\nolinkurl{#1}}}
\author{Minh N. B\`ui}
\author{Patrick L. Combettes}
\affil{North Carolina State University,
Department of Mathematics, Raleigh, NC 27695-8205, USA\\
\email{mnbui@ncsu.edu} and \email{plc@math.ncsu.edu}
}
\begin{document}

\title{\sffamily\huge%
A Warped Resolvent Algorithm\\ 
to Construct Nash Equilibria\thanks{%
Contact author: P. L. Combettes.
Email: \email{plc@math.ncsu.edu}.
Phone: +1 919 515 2671.
This work was supported by the National Science
Foundation under grant DMS-1818946.}
}

\date{\ttfamily ~}
\maketitle

\begin{abstract}
We propose an asynchronous block-iterative decomposition algorithm
to solve Nash equilibrium problems involving a mix of nonsmooth and
smooth functions acting on linear mixtures of strategies. The
methodology relies heavily on monotone operator theory and in
particular on warped resolvents.
\end{abstract}

\section{Introduction}
\label{sec:1}

We consider a noncooperative game with $m$ players indexed by
$I=\{1,\ldots,m\}$, in which the strategy $x_i$ of player 
$i\in I$ lies in a real Hilbert space $\HH_i$. A strategy 
profile is a point $\boldsymbol{x}=(x_i)_{i\in I}$ in the Hilbert
direct sum $\HHH=\bigoplus_{i\in I}\HH_i$, and the associated
profile of the players other than $i\in I$ is the vector
$\boldsymbol{x}_{\smallsetminus i}=(x_j)_{j\in
I\smallsetminus\{i\}}$ in 
$\HHH_{\smallsetminus i}=\bigoplus_{j\in I
\smallsetminus\{i\}}\HH_j$. Given an index $i\in I$ and a vector
$(x_i,\boldsymbol{y})\in\HH_i\times\HHH$, we set
$(x_i;\boldsymbol{y}_{\smallsetminus i})=(y_1,\ldots,y_{i-1},x_i,
y_{i+1},\ldots,y_m)$. 

A fundamental equilibrium notion was introduced by Nash in
\cite{Nash50,Nash51} to describe a state in which the loss of each
player cannot be reduced by unilateral deviation. In our context, a
formulation of the Nash equilibrium problem is
\begin{equation}
\label{e:0}
\text{find}\;\:\overline{\boldsymbol{x}}
\in\HHH\;\:\text{such that}\;\:
(\forall i\in I)\;\;\overline{x}_i\in\Argmind{x_i\in\HH_i}
{\theta_i(x_i)+\boldsymbol{\ell}_i(x_i;
\overline{\boldsymbol{x}}_{\smallsetminus i})},
\end{equation}
where the global loss function of player $i\in I$ is the sum of an
individual loss $\theta_i\colon\HH_i\to\RX$ and a joint loss
$\boldsymbol{\ell}_i\colon\HHH\to\RX$ that models the interactions
with the other players. 
Under convexity assumptions, numerical methods to solve \eqref{e:0}
have been investigated since the early 1970s \cite{Bens72} and they
have since involved increasingly sophisticated tools from nonlinear
analysis; see \cite{Atto08,Brav18,Bric13,Cohe87,Sign20,Comi12,%
Facc07,Heus09,Hoda10,Kann12,YiPa19}. In the present paper, we
consider the following highly modular Nash equilibrium problem
wherein the functions $(\theta_i)_{i\in I}$ and
$(\boldsymbol{\ell}_i)_{i\in I}$ of \eqref{e:0} are decomposed into
elementary components that are easier to process numerically.

\newpage
\begin{problem}
\label{prob:1}
Let $(\HH_i)_{i\in I}$, $(\KK_i)_{i\in I}$, and $(\GG_k)_{k\in K}$
be finite families of real Hilbert spaces, and set
$\HHH=\bigoplus_{i\in I}\HH_i$, 
$\KKK=\bigoplus_{i\in I}\KK_i$, and 
$\GGG=\bigoplus_{k\in K}\GG_k$. 
Suppose that the following are satisfied:
\begin{enumerate}[label={\rm[\alph*]}]
\item
\label{prob:1a}
For every $i\in I$, $\varphi_i\colon\HH_i\to\RX$ is proper, 
lower semicontinuous, and convex,
$\alpha_i\in\RP$, and $\psi_i\colon\HH_i\to\RR$ is convex and
differentiable with an $\alpha_i$-Lipschitzian gradient.
\item
\label{prob:1d}
For every $i\in I$,
$\boldsymbol{f}_{\!i}\colon\KKK\to\RR$ is such that, for every
$\boldsymbol{y}\in\KKK$, $\boldsymbol{f}_{\!i}
(\mute;\boldsymbol{y}_{\smallsetminus i})\colon\KK_i\to\RR$
is convex and G\^ateaux differentiable,
and we denote its gradient at $y_i\in\KK_i$ by
$\pnabla{i}\boldsymbol{f}_{\!i}(\boldsymbol{y})$.
Further, the operator $\boldsymbol{Q}\colon\KKK\to\KKK\colon
\boldsymbol{y}\mapsto
(\pnabla{i}\boldsymbol{f}_{\!i}(\boldsymbol{y}))_{i\in I}$
is monotone and Lipschitzian. Finally, $(\chi_i)_{i\in I}$ 
are positive numbers such that
\begin{equation}
\label{e:Qlip}
(\forall\boldsymbol{y}\in\KKK)(\forall\boldsymbol{y}'\in\KKK)
\quad
\scal{\boldsymbol{y}-\boldsymbol{y}'}{
\boldsymbol{Q}\boldsymbol{y}-\boldsymbol{Q}\boldsymbol{y}'}
\leq\sum_{i\in I}\chi_i\|y_i-y_i'\|^2.
\end{equation}
\item
\label{prob:1b}
For every $k\in K$, $g_k\colon\GG_k\to\RX$ is proper, lower
semicontinuous, and convex, $\beta_k\in\RP$, and 
$h_k\colon\GG_k\to\RR$ is convex and differentiable
with a $\beta_k$-Lipschitzian gradient.
\item
\label{prob:1c}
For every $i\in I$ and every $k\in K$,
$M_i\colon\HH_i\to\KK_i$ and $L_{k,i}\colon\HH_i\to\GG_k$
are linear and bounded, and, for every $\boldsymbol{x}\in\HHH$,
we write $\boldsymbol{L}_{k,\smallsetminus i}
\boldsymbol{x}_{\smallsetminus i}
=\sum_{j\in I\smallsetminus\{i\}}L_{k,j}x_j$ and
$\boldsymbol{M}\boldsymbol{x}=(M_jx_j)_{j\in I}$.
\end{enumerate}
The goal is to 
\begin{multline}
\label{e:nash}
\text{find}\;\:\overline{\boldsymbol{x}}\in\HHH\;\:
\text{such that}\;\:(\forall i\in I)\\
\overline{x}_i\in\Argmind{x_i\in\HH_i}{
\varphi_i(x_i)+\psi_i(x_i)+\boldsymbol{f}_{\!i}\big(M_ix_i;
(\boldsymbol{M}\overline{\boldsymbol{x}})_{\smallsetminus i}\big)
+\sum_{k\in K}(g_k+h_k)(L_{k,i}x_i
+\boldsymbol{L}_{k,\smallsetminus i}
\overline{\boldsymbol{x}}_{\smallsetminus i})}.
\end{multline}
\end{problem}

In Problem~\ref{prob:1}, the individual loss of player $i\in I$
consists of a nonsmooth component $\varphi_i$ and a smooth
component $\psi_i$, while his joint loss is decomposed into a
smooth function $\boldsymbol{f}_{\!i}$ and a sum of nonsmooth
functions $(g_k)_{k\in K}$ and smooth functions $(h_k)_{k\in K}$
acting on linear mixtures of the strategies. We aim at solving
\eqref{e:nash} with a numerical procedure that can be implemented
in a flexible fashion and that is able to cope with possibly very
large scale problems. This leads us to adopt the following design
principles:
\begin{itemize}
\item
{\bfseries Decomposition:}
Each function and each linear operator in
Problem~\ref{prob:1} is activated separately.
\item
{\bfseries Block-iterative implementation:}
Only a subgroup of functions needs to be
activated at any iteration. This makes it possible to best
modulate and adapt the computational load of each iteration in
large-scale problems.
\item
{\bfseries Asynchronous implementation:}
The computations are asynchronous in the sense that the result of
calculations initiated at earlier iterations can be incorporated at
the current one.
\end{itemize}

Our methodology is to first transform \eqref{e:nash} into a system
of monotone set-valued inclusions and then approach it via monotone
operator splitting techniques. Since no splitting
technique tailored to \eqref{e:nash} and compliant with the above
principles appears to be available, we adopt
a fresh perspective hinging on the theory of warped
resolvents \cite{Jmaa20}. In Section~\ref{sec:2} we provide the
necessary notation and background on monotone operator theory.
Section~\ref{sec:3} is devoted to the derivation of the proposed
asynchronous block-iterative algorithm to solve
Problem~\ref{prob:1}. Application examples are provided in 
Section~\ref{sec:4}.

\section{Notation and background}
\label{sec:2}

General background on monotone operators and related notions
can be found in \cite{Livre1}.

Let $\HH$ be a real Hilbert space. We denote by $2^{\HH}$ the power
set of $\HH$ and by $\Id$ the identity operator on $\HH$. The 
weak convergence and the strong convergence of 
a sequence $(x_n)_{n\in\NN}$ in $\HH$ to a point $x$ in $\HH$ are
denoted by $x_n\weakly x$ and $x_n\to x$, respectively.
Let $A\colon\HH\to 2^{\HH}$. The domain of $A$ is
$\dom A=\menge{x\in\HH}{Ax\neq\emp}$, the range of $A$ is
$\ran A=\bigcup_{x\in\dom A}Ax$, the graph of $A$ is
$\gra A=\menge{(x,x^*)\in\HH\times\HH}{x^*\in Ax}$, the set of 
zeros of $A$ is $\zer A=\menge{x\in\HH}{0\in Ax}$, and the 
inverse of $A$ is $A^{-1}\colon\HH\to 2^{\HH}\colon x^*\mapsto
\menge{x\in\HH}{x^*\in Ax}$. Now suppose that $A$ is monotone, 
that is, 
\begin{equation}
\big(\forall(x,x^*)\in\gra A\big)
\big(\forall(y,y^*)\in\gra A\big)\quad
\scal{x-y}{x^*-y^*}\geq 0.
\end{equation}
Then $A$ is maximally monotone if, for every
monotone operator $\widetilde{A}\colon\HH\to 2^{\HH}$,
$\gra A\subset\gra\widetilde{A}$ $\Rightarrow$
$A=\widetilde{A}$; $A$ is strongly monotone with constant
$\varsigma\in\RPP$ if $A-\varsigma\Id$ is monotone; and
$A$ is $3^*$ monotone if 
\begin{equation}
(\forall x\in\dom A)(\forall x^*\in\ran A)\quad
\sup_{(y,y^*)\in\gra A}\scal{x-y}{y^*-x^*}<\pinf.
\end{equation}
$\Gamma_0(\HH)$ is the set of lower semicontinuous convex
functions $\varphi\colon\HH\to\RX$ which are proper in the sense
that $\dom\varphi=\menge{x\in\HH}{\varphi(x)<\pinf}\neq\emp$.
Let $\varphi\in\Gamma_0(\HH)$.
Then $\varphi$ is supercoercive if
$\lim_{\|x\|\to\pinf}\varphi(x)/\|x\|=\pinf$
and uniformly convex if there exists an increasing function
$\phi\colon\RP\to\RPX$ that vanishes only at $0$ such that
\begin{multline}
(\forall x\in\dom\varphi)(\forall y\in\dom\varphi)
(\forall\alpha\in\zeroun)\\
\varphi\big(\alpha x+(1-\alpha)y\big)+\alpha(1-\alpha)
\phi\big(\|x-y\|\big)\leq\alpha\varphi(x)
+(1-\alpha)\varphi(y).
\end{multline}
For every $x\in\HH$, $\prox_\varphi x$ denotes
the unique minimizer of $\varphi+(1/2)\|\mute -x\|^2$.
The subdifferential of $\varphi$
is the maximally monotone operator
$\partial\varphi\colon\HH\to 2^{\HH}\colon
x\mapsto\menge{x^*\in\HH}{(\forall y\in\HH)\;
\scal{y-x}{x^*}+\varphi(x)\leq\varphi(y)}$.
Let $C$ be a convex subset of $\HH$.
The indicator function of $C$ is
\begin{equation}
\iota_C\colon\HH\to\RPX\colon x\mapsto
\begin{cases}
0,&\text{if}\;\:x\in C;\\
\pinf,&\text{otherwise},
\end{cases}
\end{equation}
and the strong relative interior of $C$ is
\begin{equation}
\sri C=\Menge{x\in C}{\bigcup_{\lambda\in\,\RPP}\lambda(C-x)
\;\:\text{is a closed vector subspace of}\;\:\HH}.
\end{equation}

The following notion of a warped resolvent will be instrumental to
our approach.

\begin{definition}[\cite{Jmaa20}]
\label{d:warped}
Suppose that $\XXX$ is a real Hilbert space.
Let $\boldsymbol{\mathsf{D}}$ be a nonempty subset of $\XXX$,
let $\boldsymbol{\mathsf{K}}\colon
\boldsymbol{\mathsf{D}}\to\XXX$, and let
$\lag\colon\XXX\to 2^{\XXX}$ be such that
$\ran\boldsymbol{\mathsf{K}}\subset
\ran\!(\boldsymbol{\mathsf{K}}+\lag)$ and
$\boldsymbol{\mathsf{K}}+\lag$ is injective.
The \emph{warped resolvent} of $\lag$ with kernel
$\boldsymbol{\mathsf{K}}$ is
$J_{\lag}^{\boldsymbol{\mathsf{K}}}=
(\boldsymbol{\mathsf{K}}+\lag)^{-1}\circ\boldsymbol{\mathsf{K}}$.
\end{definition}

We now provide a warped resolvent algorithm to find a zero of a
maximally monotone operator $\lag\colon\XXX\to 2^{\XXX}$,
where $\XXX$ is a real Hilbert space.
This algorithm has a simple geometric interpretation:
at iteration $n$, we use the evaluation of the
warped resolvent $J_{\lag}^{\boldsymbol{\mathsf{K}}_n}$ at a
perturbation $\widetilde{\boldsymbol{\mathsf{x}}}_n$ of the current
iterate $\boldsymbol{\mathsf{x}}_n$ to construct a point
$(\boldsymbol{\mathsf{y}}_n,\boldsymbol{\mathsf{y}}_n^*)\in
\gra\lag$. By monotonicity of $\lag$, 
\begin{equation}
\label{e:d8}
\zer\lag\subset\boldsymbol{\mathsf{H}}_n=
\menge{\boldsymbol{\mathsf{z}}\in\XXX}
{\scal{\boldsymbol{\mathsf{z}}-\boldsymbol{\mathsf{y}}_n}
{\boldsymbol{\mathsf{y}}_n^*}\leq 0},
\end{equation}
and the update $\boldsymbol{\mathsf{x}}_{n+1}$ is a relaxed
projection of $\boldsymbol{\mathsf{x}}_n$ onto the half-space
$\boldsymbol{\mathsf{H}}_n$.

\begin{proposition}
\label{p:5}
Let $\XXX$ be a real Hilbert space and let
$\lag\colon\XXX\to 2^{\XXX}$ be a maximally
monotone operator such that $\zer\lag\neq\emp$.
Let $\boldsymbol{\mathsf{x}}_0\in\XXX$,
let $\varepsilon\in\zeroun$,
let $\varsigma\in\RPP$,
and let $\varpi\in\left]\varsigma,\pinf\right[$.
Further, for every $n\in\NN$,
let $\lambda_n\in\left[\varepsilon,2-\varepsilon\right]$,
let $\widetilde{\boldsymbol{\mathsf{x}}}_n\in\XXX$, and
let $\boldsymbol{\mathsf{K}}_n\colon\XXX\to\XXX$ be a 
$\varsigma$-strongly monotone and $\varpi$-Lipschitzian operator.
Iterate
\begin{equation}
\label{e:8824}
\begin{array}{l}
\text{for}\;n=0,1,\ldots\\
\left\lfloor
\begin{array}{l}
\boldsymbol{\mathsf{y}}_n
=J_{\lag}^{\boldsymbol{\mathsf{K}}_n}
\widetilde{\boldsymbol{\mathsf{x}}}_n;\\
\boldsymbol{\mathsf{y}}_n^*
=\boldsymbol{\mathsf{K}}_n\widetilde{\boldsymbol{\mathsf{x}}}_n
-\boldsymbol{\mathsf{K}}_n\boldsymbol{\mathsf{y}}_n;\\
\text{if}\;\scal{\boldsymbol{\mathsf{y}}_n
-\boldsymbol{\mathsf{x}}_n}{\boldsymbol{\mathsf{y}}_n^*}<0\\
\left\lfloor
\begin{array}{l}
\boldsymbol{\mathsf{x}}_{n+1}
=\boldsymbol{\mathsf{x}}_n
+\dfrac{\lambda_n\scal{\boldsymbol{\mathsf{y}}_n
-\boldsymbol{\mathsf{x}}_n}{\boldsymbol{\mathsf{y}}_n^*}}
{\|\boldsymbol{\mathsf{y}}_n^*\|^2}\,\boldsymbol{\mathsf{y}}_n^*;\\
\end{array}
\right.\\
\text{else}\\
\left\lfloor
\begin{array}{l}
\boldsymbol{\mathsf{x}}_{n+1}=\boldsymbol{\mathsf{x}}_n.
\end{array}
\right.\\[2mm]
\end{array}
\right.\\
\end{array}
\end{equation}
Then the following hold:
\begin{enumerate}
\item
\label{p:5i}
$\sum_{n\in\NN}\|\boldsymbol{\mathsf{x}}_{n+1}
-\boldsymbol{\mathsf{x}}_n\|^2<\pinf$.
\item
\label{p:5ii}
Suppose that $\widetilde{\boldsymbol{\mathsf{x}}}_n
-\boldsymbol{\mathsf{x}}_n\to\boldsymbol{\mathsf{0}}$.
Then $\boldsymbol{\mathsf{x}}_n-\boldsymbol{\mathsf{y}}_n
\to\boldsymbol{\mathsf{0}}$ and 
$(\boldsymbol{\mathsf{x}}_n)_{n\in\NN}$
converges weakly to a point in $\zer\lag$.
\end{enumerate}
\end{proposition}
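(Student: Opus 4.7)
The plan is to view the recursion as an outer approximation scheme that is Fej\'er monotone with respect to $\zer\lag$. Since the definition of the warped resolvent gives $\boldsymbol{\mathsf{K}}_n\widetilde{\boldsymbol{\mathsf{x}}}_n\in(\boldsymbol{\mathsf{K}}_n+\lag)\boldsymbol{\mathsf{y}}_n$, we have $\boldsymbol{\mathsf{y}}_n^*\in\lag\boldsymbol{\mathsf{y}}_n$, so monotonicity of $\lag$ yields the inclusion $\zer\lag\subset\boldsymbol{\mathsf{H}}_n$ from \eqref{e:d8}. When $\scal{\boldsymbol{\mathsf{y}}_n-\boldsymbol{\mathsf{x}}_n}{\boldsymbol{\mathsf{y}}_n^*}<0$, the update in \eqref{e:8824} is the standard relaxed projection of $\boldsymbol{\mathsf{x}}_n$ onto $\boldsymbol{\mathsf{H}}_n$ with relaxation $\lambda_n$, and otherwise $\boldsymbol{\mathsf{x}}_n\in\boldsymbol{\mathsf{H}}_n$ with no motion. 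Writing $d_n=\max\{0,\scal{\boldsymbol{\mathsf{x}}_n-\boldsymbol{\mathsf{y}}_n}{\boldsymbol{\mathsf{y}}_n^*}\}/\|\boldsymbol{\mathsf{y}}_n^*\|$ for the distance from $\boldsymbol{\mathsf{x}}_n$ to $\boldsymbol{\mathsf{H}}_n$, expanding $\|\boldsymbol{\mathsf{x}}_{n+1}-\boldsymbol{\mathsf{z}}\|^2$ in both branches delivers $\|\boldsymbol{\mathsf{x}}_{n+1}-\boldsymbol{\mathsf{x}}_n\|=\lambda_n d_n$ and the Fej\'er inequality
\[
(\forall\boldsymbol{\mathsf{z}}\in\zer\lag)\quad\|\boldsymbol{\mathsf{x}}_{n+1}-\boldsymbol{\mathsf{z}}\|^2\leq\|\boldsymbol{\mathsf{x}}_n-\boldsymbol{\mathsf{z}}\|^2-\lambda_n(2-\lambda_n)d_n^2.
\]

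Assertion \ref{p:5i} follows by telescoping: $\sum_{n\in\NN}\lambda_n(2-\lambda_n)d_n^2<\pinf$, and the bounds $\lambda_n\in[\varepsilon,2-\varepsilon]$ together with the identity $\|\boldsymbol{\mathsf{x}}_{n+1}-\boldsymbol{\mathsf{x}}_n\|^2=\lambda_n^2d_n^2$ give $\sum_{n\in\NN}\|\boldsymbol{\mathsf{x}}_{n+1}-\boldsymbol{\mathsf{x}}_n\|^2<\pinf$ and, in particular, $d_n\to 0$. I expect the main obstacle to arise in \ref{p:5ii}, in converting $d_n\to 0$ into the strong convergence $\boldsymbol{\mathsf{x}}_n-\boldsymbol{\mathsf{y}}_n\to\boldsymbol{\mathsf{0}}$: this is precisely where the strong monotonicity and Lipschitz continuity of $\boldsymbol{\mathsf{K}}_n$ must be used jointly. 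Applied to the pair $(\widetilde{\boldsymbol{\mathsf{x}}}_n,\boldsymbol{\mathsf{y}}_n)$ they yield $\scal{\widetilde{\boldsymbol{\mathsf{x}}}_n-\boldsymbol{\mathsf{y}}_n}{\boldsymbol{\mathsf{y}}_n^*}\geq\varsigma\|\widetilde{\boldsymbol{\mathsf{x}}}_n-\boldsymbol{\mathsf{y}}_n\|^2$ and $\|\boldsymbol{\mathsf{y}}_n^*\|\leq\varpi\|\widetilde{\boldsymbol{\mathsf{x}}}_n-\boldsymbol{\mathsf{y}}_n\|$. Splitting $\scal{\widetilde{\boldsymbol{\mathsf{x}}}_n-\boldsymbol{\mathsf{y}}_n}{\boldsymbol{\mathsf{y}}_n^*}=\scal{\widetilde{\boldsymbol{\mathsf{x}}}_n-\boldsymbol{\mathsf{x}}_n}{\boldsymbol{\mathsf{y}}_n^*}+\scal{\boldsymbol{\mathsf{x}}_n-\boldsymbol{\mathsf{y}}_n}{\boldsymbol{\mathsf{y}}_n^*}$ and applying Cauchy--Schwarz leads to $(\varsigma/\varpi)\|\widetilde{\boldsymbol{\mathsf{x}}}_n-\boldsymbol{\mathsf{y}}_n\|\leq d_n+\|\widetilde{\boldsymbol{\mathsf{x}}}_n-\boldsymbol{\mathsf{x}}_n\|$, so the hypothesis $\widetilde{\boldsymbol{\mathsf{x}}}_n-\boldsymbol{\mathsf{x}}_n\to\boldsymbol{\mathsf{0}}$ forces $\widetilde{\boldsymbol{\mathsf{x}}}_n-\boldsymbol{\mathsf{y}}_n\to\boldsymbol{\mathsf{0}}$ and hence $\boldsymbol{\mathsf{x}}_n-\boldsymbol{\mathsf{y}}_n\to\boldsymbol{\mathsf{0}}$.

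Weak convergence is then handled by Opial's lemma. Fej\'er monotonicity makes $(\boldsymbol{\mathsf{x}}_n)_{n\in\NN}$ bounded, with $(\|\boldsymbol{\mathsf{x}}_n-\boldsymbol{\mathsf{z}}\|)_{n\in\NN}$ convergent for every $\boldsymbol{\mathsf{z}}\in\zer\lag$. For any weak cluster point $\boldsymbol{\mathsf{x}}^\star$ of $(\boldsymbol{\mathsf{x}}_n)_{n\in\NN}$ along a subsequence $(n_k)$, the previous step yields $\boldsymbol{\mathsf{y}}_{n_k}\weakly\boldsymbol{\mathsf{x}}^\star$, while the bound $\|\boldsymbol{\mathsf{y}}_n^*\|\leq\varpi(\|\widetilde{\boldsymbol{\mathsf{x}}}_n-\boldsymbol{\mathsf{x}}_n\|+\|\boldsymbol{\mathsf{x}}_n-\boldsymbol{\mathsf{y}}_n\|)$ gives $\boldsymbol{\mathsf{y}}_{n_k}^*\to\boldsymbol{\mathsf{0}}$ strongly. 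Sequential weak--strong closedness of $\gra\lag$ (from maximal monotonicity of $\lag$) then delivers $\boldsymbol{\mathsf{x}}^\star\in\zer\lag$, and Opial's lemma concludes.
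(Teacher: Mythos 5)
Your proposal is correct, and it is genuinely more self-contained than what the paper does. The paper's own proof of Proposition~\ref{p:5} is essentially a citation: well-posedness of the warped resolvents and both assertion \ref{p:5i} and the weak convergence claim are imported wholesale from \cite[Theorem~4.2 and Remark~4.3]{Jmaa20}, and the only argument actually carried out in the paper is the final step $\boldsymbol{\mathsf{x}}_n-\boldsymbol{\mathsf{y}}_n\to\boldsymbol{\mathsf{0}}$, obtained by combining the fact $\boldsymbol{\mathsf{K}}_n\widetilde{\boldsymbol{\mathsf{x}}}_n-\boldsymbol{\mathsf{K}}_n\boldsymbol{\mathsf{y}}_n\to\boldsymbol{\mathsf{0}}$ (itself extracted from the proof of the cited theorem) with the lower bound $\varsigma\|\boldsymbol{\mathsf{x}}-\boldsymbol{\mathsf{y}}\|\leq\|\boldsymbol{\mathsf{K}}_n\boldsymbol{\mathsf{x}}-\boldsymbol{\mathsf{K}}_n\boldsymbol{\mathsf{y}}\|$. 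You instead reconstruct the full Fej\'er-monotone outer-approximation argument that the paper only sketches informally around \eqref{e:d8}: the inclusion $\boldsymbol{\mathsf{y}}_n^*\in\lag\boldsymbol{\mathsf{y}}_n$, the half-space containment, the relaxed-projection identity $\|\boldsymbol{\mathsf{x}}_{n+1}-\boldsymbol{\mathsf{x}}_n\|=\lambda_nd_n$ with the quasi-Fej\'er inequality, and the telescoping bound giving \ref{p:5i}. Your route to $\boldsymbol{\mathsf{x}}_n-\boldsymbol{\mathsf{y}}_n\to\boldsymbol{\mathsf{0}}$ is also slightly different and arguably cleaner: rather than passing through $\boldsymbol{\mathsf{K}}_n\widetilde{\boldsymbol{\mathsf{x}}}_n-\boldsymbol{\mathsf{K}}_n\boldsymbol{\mathsf{y}}_n\to\boldsymbol{\mathsf{0}}$, you combine strong monotonicity and Lipschitz continuity directly to get $(\varsigma/\varpi)\|\widetilde{\boldsymbol{\mathsf{x}}}_n-\boldsymbol{\mathsf{y}}_n\|\leq d_n+\|\widetilde{\boldsymbol{\mathsf{x}}}_n-\boldsymbol{\mathsf{x}}_n\|$ and use $d_n\to 0$ from \ref{p:5i}. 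The concluding Opial/demiclosedness step (weak--strong closedness of $\gra\lag$ plus Fej\'er monotonicity) is standard and matches what the cited reference must be doing internally. The only cosmetic point worth tidying is the degenerate case $\boldsymbol{\mathsf{y}}_n^*=\boldsymbol{\mathsf{0}}$, where $d_n$ should be read as $0$ and your strong-monotonicity inequality forces $\widetilde{\boldsymbol{\mathsf{x}}}_n=\boldsymbol{\mathsf{y}}_n$, so nothing breaks. What your approach buys is a proof that can be read without \cite{Jmaa20}; what the paper's approach buys is brevity and reuse of a more general convergence theorem.
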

\begin{proof}
It follows from \cite[Proposition~3.9(i){[d]}\&(ii){[b]}]{Jmaa20}
that the warped resolvents
$(J_{\lag}^{\boldsymbol{\mathsf{K}}_n})_{n\in\NN}$ in
\eqref{e:8824} are well defined. In turn, we derive 
\ref{p:5i} and the weak convergence claim from 
\cite[Theorem~4.2 and Remark~4.3]{Jmaa20}. It thus remains to prove
that $\boldsymbol{\mathsf{x}}_n-\boldsymbol{\mathsf{y}}_n
\to\boldsymbol{\mathsf{0}}$. It is shown in the proof of 
\cite[Theorem~4.2(ii)]{Jmaa20} that
$\boldsymbol{\mathsf{K}}_n\widetilde{\boldsymbol{\mathsf{x}}}_n
-\boldsymbol{\mathsf{K}}_n\boldsymbol{\mathsf{y}}_n
\to\boldsymbol{\mathsf{0}}$. At the same time,
for every $n\in\NN$, every $\boldsymbol{\mathsf{x}}\in\XXX$,
and every $\boldsymbol{\mathsf{y}}\in\XXX$,
we deduce from the Cauchy--Schwarz inequality that
$\varsigma\|\boldsymbol{\mathsf{x}}-\boldsymbol{\mathsf{y}}\|^2
\leq\scal{\boldsymbol{\mathsf{x}}-\boldsymbol{\mathsf{y}}}{
\boldsymbol{\mathsf{K}}_n\boldsymbol{\mathsf{x}}-
\boldsymbol{\mathsf{K}}_n\boldsymbol{\mathsf{y}}}
\leq\|\boldsymbol{\mathsf{x}}-\boldsymbol{\mathsf{y}}\|\,
\|\boldsymbol{\mathsf{K}}_n\boldsymbol{\mathsf{x}}-
\boldsymbol{\mathsf{K}}_n\boldsymbol{\mathsf{y}}\|$,
from which it follows that
\begin{equation}
\label{e:1878}
\varsigma\|\boldsymbol{\mathsf{x}}-\boldsymbol{\mathsf{y}}\|
\leq\|\boldsymbol{\mathsf{K}}_n\boldsymbol{\mathsf{x}}-
\boldsymbol{\mathsf{K}}_n\boldsymbol{\mathsf{y}}\|.
\end{equation}
Therefore, $\|\boldsymbol{\mathsf{x}}_n-\boldsymbol{\mathsf{y}}_n\|
\leq\|\boldsymbol{\mathsf{x}}_n
-\widetilde{\boldsymbol{\mathsf{x}}}_n\|
+\|\widetilde{\boldsymbol{\mathsf{x}}}_n
-\boldsymbol{\mathsf{y}}_n\|
\leq\|\boldsymbol{\mathsf{x}}_n
-\widetilde{\boldsymbol{\mathsf{x}}}_n\|
+(1/\varsigma)
\|\boldsymbol{\mathsf{K}}_n\widetilde{\boldsymbol{\mathsf{x}}}_n
-\boldsymbol{\mathsf{K}}_n\boldsymbol{\mathsf{y}}_n\|
\to 0$, as desired.
\end{proof}

\section{Algorithm}
\label{sec:3}

As mentioned in Section~\ref{sec:1}, there exists no method
tailored to the format of Problem~\ref{prob:1} that can solve it in
an asynchronous block-iterative fashion. Our methodology to design
such an algorithm can be broken down in the following steps:
\begin{enumerate}[label={\rm{\bfseries\arabic*.}}]
\item
We rephrase \eqref{e:nash} as a monotone inclusion problem in
$\HHH$, namely, 
\begin{equation}
\label{e:50}
\text{find}\;\:\overline{\boldsymbol{x}}\in\HHH\;\:
\text{such that}\;\:\boldsymbol{0}\in\boldsymbol{A}
\overline{\boldsymbol{x}}
+\boldsymbol{M}^*\big(\boldsymbol{Q}(\boldsymbol{M}
\overline{\boldsymbol{x}})\big)
+\boldsymbol{L}^*\big(\boldsymbol{B}(\boldsymbol{L}
\overline{\boldsymbol{x}})\big),
\end{equation}
where $\boldsymbol{Q}$ and $\boldsymbol{M}$ are defined in
Problem~\ref{prob:1}\ref{prob:1d} and
Problem~\ref{prob:1}\ref{prob:1c}, respectively, and
\begin{equation}
\label{e:3851}
\begin{cases}
\displaystyle
\boldsymbol{A}\colon\HHH\to 2^{\HHH}\colon\boldsymbol{x}\mapsto
\bigtimes_{i\in I}\big(\partial\varphi_i(x_i)
+\nabla\psi_i(x_i)\big)\\
\displaystyle
\boldsymbol{B}\colon\GGG\to 2^{\GGG}\colon
\boldsymbol{z}\mapsto\bigtimes_{k\in K}\big(\partial g_k(z_k)
+\nabla h_k(z_k)\big)\\
\boldsymbol{L}\colon\HHH\to\GGG\colon\boldsymbol{x}\mapsto
\big(\sum_{i\in I}L_{k,i}x_i\big)_{k\in K}.
\end{cases}
\end{equation}
\item
The inclusion in \eqref{e:50} involves more than two operators,
namely $\boldsymbol{A}$, $\boldsymbol{B}$, $\boldsymbol{Q}$,
$\boldsymbol{L}$, and $\boldsymbol{M}$. Hence, in the
spirit of the decomposition methodologies of
\cite{Jmaa20,Siop13,MaPr18}, a space bigger than $\HHH$ is
required to devise a splitting method to solve it. We set
$\XXX=\HHH\oplus\KKK\oplus\GGG\oplus\KKK\oplus\GGG$ and consider
the inclusion problem
\begin{equation}
\label{e:51}
\text{find}\;\:\overline{\boldsymbol{\mathsf{x}}}\in\XXX\;\:
\text{such that}\;\:\boldsymbol{\mathsf{0}}\in
\lag\overline{\boldsymbol{\mathsf{x}}},
\end{equation}
where
\begin{multline}
\label{e:lag}
\lag\colon\XXX\to 2^{\XXX}\colon
(\boldsymbol{x},\boldsymbol{y},\boldsymbol{z},
\boldsymbol{u}^*,\boldsymbol{v}^*)\mapsto\\
(\boldsymbol{A}\boldsymbol{x}+\boldsymbol{M}^*\boldsymbol{u}^*
+\boldsymbol{L}^*\boldsymbol{v}^*)
\times\{\boldsymbol{Q}\boldsymbol{y}-\boldsymbol{u}^*\}
\times(\boldsymbol{B}\boldsymbol{z}-\boldsymbol{v}^*)
\times\{\boldsymbol{y}-\boldsymbol{M}\boldsymbol{x}\}
\times\{\boldsymbol{z}-\boldsymbol{L}\boldsymbol{x}\}.
\end{multline}
\item
We show that, if 
$\boldsymbol{\mathsf{x}}=(\boldsymbol{x},\boldsymbol{y},
\boldsymbol{z},\boldsymbol{u}^*,\boldsymbol{v}^*)$ solves 
\eqref{e:51}, then $\boldsymbol{x}$ solves \eqref{e:50} and,
therefore, \eqref{e:nash}.
\item[\bfseries 4.]
To solve \eqref{e:51}, we implement the warped resolvent algorithm
of Proposition~\ref{p:5} with a specific choice of the
auxiliary points 
$(\widetilde{\boldsymbol{\mathsf{x}}}_n)_{n\in\NN}$ and the kernels
$(\boldsymbol{\mathsf{K}}_n)_{n\in\NN}$ that will lead to an
asynchronous block-iterative splitting algorithm.
\end{enumerate}

The methodology just described is put in motion in our main
theorem, which we now state and prove. 

\begin{theorem}
\label{t:1}
Consider the setting of Problem~\ref{prob:1}.
Let $\eta\in\RPP$ and $\varepsilon\in\zeroun$ be such that
$1/\varepsilon>\max\{\alpha_i+\eta,\beta_k+\eta,
\chi_i+\eta\}_{i\in I,k\in K}$, let $(\lambda_n)_{n\in\NN}$ be in
$\left[\varepsilon,2-\varepsilon\right]$, and let $D\in\NN$.
Suppose that the following are satisfied:
\begin{enumerate}[label={\rm[\alph*]}]
\item
\label{t:1a}
For every $i\in I$ and every $n\in\NN$, $\tau_i(n)\in\NN$ 
satisfies $n-D\leq\tau_i(n)\leq n$,
$\gamma_{i,n}\in\left[\varepsilon,1/(\alpha_i+\eta)\right]$,
$\mu_{i,n}\in\left[\varepsilon,1/(\chi_i+\eta)\right]$,
$\sigma_{i,n}\in\left[\varepsilon,1/\varepsilon\right]$,
$x_{i,0}\in\HH_i$, $y_{i,0}\in\KK_i$, and $u_{i,0}^*\in\KK_i$.
\item
\label{t:1b}
For every $k\in K$ and every $n\in\NN$,
$\delta_k(n)\in\NN$ satisfies $n-D\leq\delta_k(n)\leq n$,
$\nu_{k,n}\in\left[\varepsilon,1/(\beta_k+\eta)\right]$,
$\varrho_{k,n}\in\left[\varepsilon,1/\varepsilon\right]$,
$z_{k,0}\in\GG_k$, and $v_{k,0}^*\in\GG_k$.
\item
\label{t:1c}
$(I_n)_{n\in\NN}$ are nonempty subsets of $I$
and $(K_n)_{n\in\NN}$ are nonempty subsets of $K$ such that,
for some $P\in\NN$,
\begin{equation}
\label{e:1093}
I_0=I,\quad K_0=K,\quad\text{and}\quad(\forall n\in\NN)\;\;
\bigcup_{j=n}^{n+P}I_j=I\;\:\text{and}\;\:
\bigcup_{j=n}^{n+P}K_j=K.
\end{equation}
\end{enumerate}
Iterate
\begin{equation}
\label{e:dai}
\begin{array}{l}
\text{for}\;n=0,1,\ldots\\
\left\lfloor
\begin{array}{l}
\text{for every}\;i\in I_n\\
\left\lfloor
\begin{array}{l}
q_{i,n}=y_{i,\tau_i(n)}+\mu_{i,\tau_i(n)}\big(u_{i,\tau_i(n)}^*
-\pnabla{i}\boldsymbol{f}_{\!i}\big(\boldsymbol{y}_{\tau_i(n)}\big)
\big);\\
c_{i,n}^*=u_{i,\tau_i(n)}^*
+\sigma_{i,\tau_i(n)}\big(M_ix_{i,\tau_i(n)}-y_{i,\tau_i(n)}\big);
\\
x_{i,n}^*=x_{i,\tau_i(n)}-\gamma_{i,\tau_i(n)}\big(
\nabla\psi_i\big(x_{i,\tau_i(n)}\big)+M_i^*u_{i,\tau_i(n)}^*
+\sum_{k\in K}L_{k,i}^*v_{k,\tau_i(n)}^*\big);\\
a_{i,n}=\prox_{\gamma_{i,\tau_i(n)}\varphi_i}x_{i,n}^*;\\
s_{i,n}^*=\gamma_{i,\tau_i(n)}^{-1}(x_{i,n}^*-a_{i,n})
+\nabla\psi_i(a_{i,n})+M_i^*c_{i,n}^*;\\
c_{i,n}=q_{i,n}-M_ia_{i,n};
\end{array}
\right.
\\
\text{for every}\;i\in I\smallsetminus I_n\\
\left\lfloor
\begin{array}{l}
q_{i,n}=q_{i,n-1};\;c_{i,n}^*=c_{i,n-1}^*;\;
a_{i,n}=a_{i,n-1};\;s_{i,n}^*=s_{i,n-1}^*;\;
c_{i,n}=c_{i,n-1};
\end{array}
\right.\\
\text{for every}\;k\in K_n\\
\left\lfloor
\begin{array}{l}
d_{k,n}^*=z_{k,\delta_k(n)}+\nu_{k,\delta_k(n)}\big(
v_{k,\delta_k(n)}^*
-\nabla h_k\big(z_{k,\delta_k(n)}\big)\big);\\
b_{k,n}=\prox_{\nu_{k,\delta_k(n)}g_k}d_{k,n}^*;\\
e_{k,n}^*=v_{k,\delta_k(n)}^*+\varrho_{k,\delta_k(n)}\big(
\sum_{i\in I}L_{k,i}x_{i,\delta_k(n)}-z_{k,\delta_k(n)}\big);\\
b_{k,n}^*=\nu_{k,\delta_k(n)}^{-1}(d_{k,n}^*-b_{k,n})
+\nabla h_k(b_{k,n})-e_{k,n}^*;\\
e_{k,n}=b_{k,n}-\sum_{i\in I}L_{k,i}a_{i,n};
\end{array}
\right.\\
\text{for every}\;k\in K\smallsetminus K_n\\
\left\lfloor
\begin{array}{l}
b_{k,n}=b_{k,n-1};\;e_{k,n}^*=e_{k,n-1}^*;\;
b_{k,n}^*=b_{k,n-1}^*;\;
e_{k,n}=b_{k,n}-\sum_{i\in I}L_{k,i}a_{i,n};
\end{array}
\right.\\
\text{for every}\;i\in I\\
\left\lfloor
\begin{array}{l}
a_{i,n}^*=s_{i,n}^*+\sum_{k\in K}L_{k,i}^*e_{k,n}^*;\\
q_{i,n}^*=\pnabla{i}\boldsymbol{f}_{\!i}(\boldsymbol{q}_n)
-c_{i,n}^*;
\end{array}
\right.\\
\begin{aligned}
\pi_n&=\textstyle\sum_{i\in I}\big(
\scal{a_{i,n}-x_{i,n}}{a_{i,n}^*}
+\scal{q_{i,n}-y_{i,n}}{q_{i,n}^*}
+\scal{c_{i,n}}{c_{i,n}^*-u_{i,n}^*}\big)\\
&\textstyle
\quad\;+\sum_{k\in K}\big(\scal{b_{k,n}-z_{k,n}}{b_{k,n}^*}
+\scal{e_{k,n}}{e_{k,n}^*-v_{k,n}^*}\big);
\end{aligned}
\\
\text{if}\;\pi_n<0\\
\left\lfloor
\begin{array}{l}
\theta_n=\lambda_n\pi_n/\big(
\sum_{i\in I}\big(\|a_{i,n}^*\|^2+\|q_{i,n}^*\|^2
+\|c_{i,n}\|^2\big)
+\sum_{k\in K}\big(\|b_{k,n}^*\|^2+\|e_{k,n}\|^2\big)\big);\\
\text{for every}\;i\in I\\
\left\lfloor
\begin{array}{l}
x_{i,n+1}=x_{i,n}+\theta_na_{i,n}^*;\;
y_{i,n+1}=y_{i,n}+\theta_nq_{i,n}^*;\;
u_{i,n+1}^*=u_{i,n}^*+\theta_nc_{i,n};
\end{array}
\right.\\
\text{for every}\;k\in K\\
\left\lfloor
\begin{array}{l}
z_{k,n+1}=z_{k,n}+\theta_nb_{k,n}^*;\;
v_{k,n+1}^*=v_{k,n}^*+\theta_ne_{k,n};
\end{array}
\right.\\[2mm]
\end{array}
\right.\\
\text{else}\\
\left\lfloor
\begin{array}{l}
\text{for every}\;i\in I\\
\left\lfloor
\begin{array}{l}
x_{i,n+1}=x_{i,n};\;
y_{i,n+1}=y_{i,n};\;
u_{i,n+1}^*=u_{i,n}^*;
\end{array}
\right.\\
\text{for every}\;k\in K\\
\left\lfloor
\begin{array}{l}
z_{k,n+1}=z_{k,n};\;
v_{k,n+1}^*=v_{k,n}^*.
\end{array}
\right.\\[2mm]
\end{array}
\right.\\[10mm]
\end{array}
\right.
\end{array}
\end{equation}
Furthermore, suppose that there exist
$\widehat{\boldsymbol{x}}\in\HHH$,
$\widehat{\boldsymbol{u}}^*\in\KKK$, and
$\widehat{\boldsymbol{v}}^*\in\GGG$ such that
\begin{equation}
\label{e:7592}
\begin{cases}
(\forall i\in I)\;\;\widehat{u}_i^*
=\pnabla{i}\boldsymbol{f}_{\!i}
(\boldsymbol{M}\widehat{\boldsymbol{x}})\\
(\forall k\in K)\;\;\widehat{v}_k^*\in
(\partial g_k+\nabla h_k)\big(
\sum_{j\in I}L_{k,j}\widehat{x}_j\big)\\
(\forall i\in I)\;\;{-}M_i^*\widehat{u}_i^*
-\sum_{k\in K}L_{k,i}^*\widehat{v}_k^*\in
\partial\varphi_i(\widehat{x}_i)+\nabla\psi_i(\widehat{x}_i).
\end{cases}
\end{equation}
Then $({\boldsymbol{x}}_n)_{n\in\NN}$ converges weakly to a
solution to Problem~\ref{prob:1}. 
\end{theorem}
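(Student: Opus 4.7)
My plan is to follow the four-step roadmap sketched just before the theorem and cast the algorithm~\eqref{e:dai} as a concrete instance of Proposition~\ref{p:5} applied to the operator $\lag$ defined in~\eqref{e:lag} on the product space $\XXX=\HHH\oplus\KKK\oplus\GGG\oplus\KKK\oplus\GGG$. First, I would verify that $\lag$ is maximally monotone. Its first block couples $\boldsymbol{A}$ (a diagonal maximally monotone operator because each $\partial\varphi_i+\nabla\psi_i$ is) with the linear skew couplings involving $\boldsymbol{M}^*,\boldsymbol{L}^*$, and analogously for the remaining components; maximal monotonicity then results from standard skew-sum arguments combined with the monotonicity hypothesis on $\boldsymbol{Q}$ in~\ref{prob:1d}. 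Next, I would show that the hypothesis~\eqref{e:7592} supplies the Kuhn--Tucker-type point $\overline{\boldsymbol{\mathsf{x}}}=(\widehat{\boldsymbol{x}},\boldsymbol{M}\widehat{\boldsymbol{x}},\boldsymbol{L}\widehat{\boldsymbol{x}},\widehat{\boldsymbol{u}}^*,\widehat{\boldsymbol{v}}^*)\in\zer\lag$, and, conversely, that any solution $\overline{\boldsymbol{\mathsf{x}}}$ of~\eqref{e:51} yields via its first component a solution to~\eqref{e:50} and therefore, by Fermat's rule applied componentwise, to~\eqref{e:nash}.

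The decisive step is the explicit identification of the kernels $(\boldsymbol{\mathsf{K}}_n)_{n\in\NN}$ and the auxiliary points $(\widetilde{\boldsymbol{\mathsf{x}}}_n)_{n\in\NN}$. I would take $\widetilde{\boldsymbol{\mathsf{x}}}_n$ to be the delayed profile built from $x_{i,\tau_i(n)}$, $y_{i,\tau_i(n)}$, $u_{i,\tau_i(n)}^*$ for $i\in I_n$, $z_{k,\delta_k(n)}$, $v_{k,\delta_k(n)}^*$ for $k\in K_n$, and the stale quantities for the inactive indices, and take $\boldsymbol{\mathsf{K}}_n$ to be the block-diagonal preconditioner whose diagonal entries are the inverse step sizes $\gamma_{i,\tau_i(n)}^{-1}\Id$, $\mu_{i,\tau_i(n)}^{-1}\Id$, $\nu_{k,\delta_k(n)}^{-1}\Id$, $\sigma_{i,\tau_i(n)}^{-1}\Id$, $\varrho_{k,\delta_k(n)}^{-1}\Id$, together with the appropriate Yosida-type linear corrections needed to absorb the cocoercive parts $\nabla\psi_i$, $\nabla h_k$, and $\boldsymbol{Q}$. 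With this choice the inclusion $(\boldsymbol{\mathsf{K}}_n+\lag)\boldsymbol{\mathsf{y}}_n\ni\boldsymbol{\mathsf{K}}_n\widetilde{\boldsymbol{\mathsf{x}}}_n$ unfolds precisely into the forward--backward-type updates producing $a_{i,n}=\prox_{\gamma_{i,\tau_i(n)}\varphi_i}x_{i,n}^{*}$ and $b_{k,n}=\prox_{\nu_{k,\delta_k(n)}g_k}d_{k,n}^{*}$, while $\boldsymbol{\mathsf{y}}_n^{*}=\boldsymbol{\mathsf{K}}_n\widetilde{\boldsymbol{\mathsf{x}}}_n-\boldsymbol{\mathsf{K}}_n\boldsymbol{\mathsf{y}}_n$ assembles into the quintuple $(a_{i,n}^{*},q_{i,n}^{*},b_{k,n}^{*},c_{i,n},e_{k,n})_{i,k}$. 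The test quantity $\pi_n$ is then recognized as $\scal{\boldsymbol{\mathsf{y}}_n-\boldsymbol{\mathsf{x}}_n}{\boldsymbol{\mathsf{y}}_n^{*}}$ and the relaxation parameter $\theta_n$ as the one prescribed by~\eqref{e:8824}. The uniform $\varsigma$-strong monotonicity and $\varpi$-Lipschitz continuity of $\boldsymbol{\mathsf{K}}_n$ would be obtained from the bracketing of the step sizes in~\ref{t:1a}--\ref{t:1b} together with the Baillon--Haddad-type cocoercivity of the gradient terms and the bound~\eqref{e:Qlip} on $\boldsymbol{Q}$; the slack $\eta$ in the step-size ranges is what provides the strict strong monotonicity constant.

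The remaining task is to check the hypothesis $\widetilde{\boldsymbol{\mathsf{x}}}_n-\boldsymbol{\mathsf{x}}_n\to\boldsymbol{\mathsf{0}}$ required by Proposition~\ref{p:5}\ref{p:5ii}. Proposition~\ref{p:5}\ref{p:5i} already gives $\sum_{n\in\NN}\|\boldsymbol{\mathsf{x}}_{n+1}-\boldsymbol{\mathsf{x}}_n\|^2<\pinf$. Combining this with the delay bound $n-D\leq\tau_i(n),\delta_k(n)\leq n$ from~\ref{t:1a}--\ref{t:1b} and the covering condition~\eqref{e:1093} on $(I_n,K_n)$ over windows of length $P$ shows, block by block and with at most $D+P$ accumulated increments, that each delayed component differs from the current one by a telescoping sum whose norm tends to $0$. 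Invoking Proposition~\ref{p:5}\ref{p:5ii} yields weak convergence of $(\boldsymbol{\mathsf{x}}_n)_{n\in\NN}$ to some $\overline{\boldsymbol{\mathsf{x}}}\in\zer\lag$, and projecting onto the first factor gives the announced weak convergence of $(\boldsymbol{x}_n)_{n\in\NN}$ to a solution of Problem~\ref{prob:1}. I expect the main obstacle to be the bookkeeping in the identification step: one must show that the explicit update formulas for $a_{i,n},b_{k,n},a_{i,n}^{*},q_{i,n}^{*},b_{k,n}^{*},c_{i,n},e_{k,n}$ coincide coordinate-by-coordinate with $J_{\lag}^{\boldsymbol{\mathsf{K}}_n}\widetilde{\boldsymbol{\mathsf{x}}}_n$ and $\boldsymbol{\mathsf{K}}_n\widetilde{\boldsymbol{\mathsf{x}}}_n-\boldsymbol{\mathsf{K}}_n\boldsymbol{\mathsf{y}}_n$ for the correctly chosen $\widetilde{\boldsymbol{\mathsf{x}}}_n$ and $\boldsymbol{\mathsf{K}}_n$, and that the asynchronous delays do not destroy the asymptotic regularity needed to feed Proposition~\ref{p:5}\ref{p:5ii}.
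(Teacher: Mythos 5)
Your overall architecture matches the paper's proof: maximal monotonicity of $\lag=\boldsymbol{\mathsf{R}}+\boldsymbol{\mathsf{S}}$ via the skew-sum decomposition, $\zer\lag\neq\emp$ from \eqref{e:7592}, identification of \eqref{e:dai} with \eqref{e:8824}, strong monotonicity of the kernels from the slack $\eta$ and \eqref{e:Qlip}, and the final passage from $\zer\lag$ to \eqref{e:nash} via the inclusion $\partial u+\partial v\subset\partial(u+v)$ and Fermat's rule. However, there is a genuine gap in your ``decisive step.'' You propose to take $\widetilde{\boldsymbol{\mathsf{x}}}_n$ to be ``the delayed profile'' built from $x_{i,\tau_i(n)},y_{i,\tau_i(n)},u_{i,\tau_i(n)}^*,z_{k,\delta_k(n)},v_{k,\delta_k(n)}^*$. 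No such single point of $\XXX$ exists that reproduces the algorithm: different blocks of \eqref{e:dai} read the \emph{same} variables at \emph{different} delays. For instance, player $i$'s step uses $v^*_{k,\tau_i(n)}$ and $x_{i,\tau_i(n)}$, while constraint $k$'s step uses $v^*_{k,\delta_k(n)}$ and $x_{i,\delta_k(n)}$; on top of this, inactive indices carry over quantities computed at the earlier times $\overline{\ell}_i(n)$, $\overline{\vartheta}_k(n)$. Consequently $\boldsymbol{\mathsf{K}}_n$ applied to any explicit ``delayed vector'' cannot equal the data actually fed into the resolvent. The paper resolves this by collecting all the mismatches into a perturbation $\boldsymbol{\mathsf{e}}_n^*$ measured against the \emph{current} iterate, proving $\boldsymbol{\mathsf{y}}_n=(\boldsymbol{\mathsf{K}}_n+\lag)^{-1}(\boldsymbol{\mathsf{K}}_n\boldsymbol{\mathsf{x}}_n+\boldsymbol{\mathsf{e}}_n^*)$, and only then \emph{defining} $\widetilde{\boldsymbol{\mathsf{x}}}_n$ implicitly through the surjectivity of $\boldsymbol{\mathsf{K}}_n$ (a consequence of its strong monotonicity and Lipschitz continuity) as a solution of $\boldsymbol{\mathsf{K}}_n\widetilde{\boldsymbol{\mathsf{x}}}_n=\boldsymbol{\mathsf{K}}_n\boldsymbol{\mathsf{x}}_n+\boldsymbol{\mathsf{e}}_n^*$; the condition $\widetilde{\boldsymbol{\mathsf{x}}}_n-\boldsymbol{\mathsf{x}}_n\to\boldsymbol{\mathsf{0}}$ then follows from $\boldsymbol{\mathsf{e}}_n^*\to\boldsymbol{\mathsf{0}}$ and $\varsigma\|\widetilde{\boldsymbol{\mathsf{x}}}_n-\boldsymbol{\mathsf{x}}_n\|\leq\|\boldsymbol{\mathsf{e}}_n^*\|$, not from a direct telescoping estimate on a delayed profile. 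Your convergence argument for the delayed components ($\boldsymbol{\mathsf{x}}_n-\boldsymbol{\mathsf{x}}_{\ell_i(n)}\to\boldsymbol{\mathsf{0}}$ via summability and the bound $P+D$ on the total lag) is still needed, but as an ingredient in showing $\boldsymbol{\mathsf{e}}_n^*\to\boldsymbol{\mathsf{0}}$ rather than as the final step.

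A secondary imprecision: you describe $\boldsymbol{\mathsf{K}}_n$ as a ``block-diagonal preconditioner'' with corrections absorbing the cocoercive parts. The kernel must also contain $-\boldsymbol{\mathsf{S}}$, the negative of the bounded skew operator coupling $\boldsymbol{x}$ with $(\boldsymbol{u}^*,\boldsymbol{v}^*)$ and $(\boldsymbol{y},\boldsymbol{z})$ with $(\boldsymbol{x},\boldsymbol{u}^*,\boldsymbol{v}^*)$; it is precisely this cancellation that makes $\boldsymbol{\mathsf{K}}_n+\lag=\boldsymbol{\mathsf{T}}_n+\boldsymbol{\mathsf{R}}$ separable so that its inverse \eqref{e:4211} is given componentwise by proximity operators and scalings. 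This term is skew, not cocoercive, so it is not covered by your ``Yosida-type corrections''; omitting it leaves the warped resolvent uncomputable in closed form, and including it is also what forces the Lipschitz constant $\varpi$ to involve $\|\boldsymbol{\mathsf{S}}\|$. Finally, the strong monotonicity of the kernels does not require Baillon--Haddad: Cauchy--Schwarz together with the Lipschitz constants $\alpha_i,\beta_k$ and the bound \eqref{e:Qlip} suffices, which matters because $\boldsymbol{Q}$ is only assumed monotone and Lipschitzian, not cocoercive.
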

\begin{proof}
Set $\XXX=\HHH\oplus\KKK\oplus\GGG\oplus\KKK\oplus\GGG$ and
consider the operators defined in \eqref{e:3851} and \eqref{e:lag}.
Let us first examine some properties of the operator
$\lag$ in \eqref{e:lag}. For every $i\in I$, it results from
Problem~\ref{prob:1}\ref{prob:1a} and
\cite[Theorem~20.25 and Proposition~17.31(i)]{Livre1} that
$\partial\varphi_i$ and $\nabla\psi_i$
are maximally monotone and, therefore, from
\cite[Corollary~25.5(i)]{Livre1} that
$\partial\varphi_i+\nabla\psi_i$ is maximally monotone.
Thus, in view of \eqref{e:3851} and
\cite[Proposition~20.23]{Livre1},
$\boldsymbol{A}$ is maximally monotone.
Likewise, $\boldsymbol{B}$ is maximally monotone.
Hence, since $\boldsymbol{Q}$ is maximally monotone by virtue of
Problem~\ref{prob:1}\ref{prob:1d} and
\cite[Corollary~20.28]{Livre1}, \cite[Proposition~20.23]{Livre1} 
implies that the operator
\begin{equation}
\label{e:6615}
\boldsymbol{\mathsf{R}}\colon\XXX\to 2^{\XXX}\colon
(\boldsymbol{x},\boldsymbol{y},\boldsymbol{z},
\boldsymbol{u}^*,\boldsymbol{v}^*)\mapsto
\boldsymbol{A}\boldsymbol{x}
\times\{\boldsymbol{Q}\boldsymbol{y}\}
\times\boldsymbol{B}\boldsymbol{z}
\times\{\boldsymbol{0}\}
\times\{\boldsymbol{0}\}
\end{equation}
is maximally monotone. On the other hand, since the operator
\begin{equation}
\label{e:7335}
\boldsymbol{\mathsf{S}}\colon\XXX\to\XXX\colon
(\boldsymbol{x},\boldsymbol{y},\boldsymbol{z},
\boldsymbol{u}^*,\boldsymbol{v}^*)\mapsto
(\boldsymbol{M}^*\boldsymbol{u}^*+\boldsymbol{L}^*\boldsymbol{v}^*)
\times\{{-}\boldsymbol{u}^*\}
\times\{{-}\boldsymbol{v}^*\}
\times\{\boldsymbol{y}-\boldsymbol{M}\boldsymbol{x}\}
\times\{\boldsymbol{z}-\boldsymbol{L}\boldsymbol{x}\}
\end{equation}
is linear and bounded with
\begin{equation}
\label{e:2957}
\boldsymbol{\mathsf{S}}^*={-}\boldsymbol{\mathsf{S}},
\end{equation}
we deduce from \cite[Example~20.35]{Livre1} that
$\boldsymbol{\mathsf{S}}$ is maximally monotone.
In turn, it follows from \eqref{e:lag}, \eqref{e:6615},
and \cite[Corollary~25.5(i)]{Livre1} that
\begin{equation}
\label{e:6873}
\lag=\boldsymbol{\mathsf{R}}+\boldsymbol{\mathsf{S}}
\;\text{is maximally monotone}.
\end{equation}
Upon setting
$\widehat{\boldsymbol{y}}
=\boldsymbol{M}\widehat{\boldsymbol{x}}$ and
$\widehat{\boldsymbol{z}}=\boldsymbol{L}\widehat{\boldsymbol{x}}$,
we derive from \eqref{e:7592} and \eqref{e:3851} that
$\widehat{\boldsymbol{u}}^*
=\boldsymbol{Q}\widehat{\boldsymbol{y}}$ and
$\widehat{\boldsymbol{v}}^*\in
\boldsymbol{B}\widehat{\boldsymbol{z}}$.
Further, since
\begin{equation}
\label{e:8792}
\boldsymbol{M}^*\colon\KKK\to\HHH\colon\boldsymbol{u}^*
\mapsto(M_i^*u_i^*)_{i\in I}\quad\text{and}\quad
\boldsymbol{L}^*\colon\GGG\to\HHH\colon
\boldsymbol{v}^*\mapsto\Bigg(\sum_{k\in K}
L_{k,i}^*v_k^*\Bigg)_{i\in I},
\end{equation}
it results from \eqref{e:7592} and \eqref{e:3851} that
${-}\boldsymbol{M}^*\widehat{\boldsymbol{u}}^*
{-}\boldsymbol{L}^*\widehat{\boldsymbol{v}}^*\in
\boldsymbol{A}\widehat{\boldsymbol{x}}$.
Therefore, we infer from \eqref{e:lag} that
$(\widehat{\boldsymbol{x}},\widehat{\boldsymbol{y}},
\widehat{\boldsymbol{z}},\widehat{\boldsymbol{u}}^*,
\widehat{\boldsymbol{v}}^*)\in\zer\lag$ and, hence, that
\begin{equation}
\label{e:zerL}
\zer\lag\neq\emp.
\end{equation}
Define
\begin{equation}
\label{e:4469a}
(\forall i\in I)(\forall n\in\NN)\quad
\overline{\ell}_i(n)
=\max\!\menge{j\in\NN}{j\leq n\;\:\text{and}\;\:i\in I_j}
\quad\text{and}\quad
\ell_i(n)=\tau_i\big(\overline{\ell}_i(n)\big)
\end{equation}
and
\begin{equation}
\label{e:4469b}
(\forall k\in K)(\forall n\in\NN)\quad
\overline{\vartheta}_k(n)
=\max\!\menge{j\in\NN}{j\leq n\;\:\text{and}\;\:k\in K_j}
\quad\text{and}\quad
\vartheta_k(n)=\delta_k\big(\overline{\vartheta}_k(n)\big).
\end{equation}
In addition, let $\kappa\in\RPP$ be a Lipschitz constant of 
$\boldsymbol{Q}$ in Problem~\ref{prob:1}\ref{prob:1d}, set
\begin{equation}
\label{e:1688}
\begin{cases}
\alpha=\sqrt{2\big(\varepsilon^{-2}
+\max_{i\in I}\alpha_i^2\big)},\;\:
\beta=\sqrt{2\big(\varepsilon^{-2}+\max_{k\in K}\beta_k^2\big)},
\;\:\chi=\sqrt{2(\varepsilon^{-2}+\kappa^2)}\\
\varsigma=\min\{\varepsilon,\eta\},\,\:
\varpi=\|\boldsymbol{\mathsf{S}}\|
+\max\{\alpha,\beta,\chi,1/\varepsilon\},
\end{cases}
\end{equation}
and define
\begin{equation}
\label{e:1372a}
(\forall n\in\NN)\quad
\begin{cases}
\boldsymbol{E}_n\colon\HHH\to\HHH\colon\boldsymbol{x}\mapsto
\big(\gamma_{i,\ell_i(n)}^{-1}x_i
-\nabla\psi_i(x_i)\big)_{i\in I}\\
\boldsymbol{F}_{\!n}\colon\KKK\to\KKK\colon\boldsymbol{y}\mapsto
\big(\mu_{i,\ell_i(n)}^{-1}y_i
-\pnabla{i}\boldsymbol{f}_{\!i}(\boldsymbol{y})\big)_{i\in I}\\
\boldsymbol{G}_n\colon\GGG\to\GGG\colon\boldsymbol{z}\mapsto
\big(\nu_{k,\vartheta_k(n)}^{-1}z_k-\nabla h_k(z_k)\big)_{k\in K}
\\
\boldsymbol{\mathsf{T}}_n\colon\XXX\to\XXX\colon
(\boldsymbol{x},\boldsymbol{y},\boldsymbol{z},
\boldsymbol{u}^*,\boldsymbol{v}^*)\mapsto\Big(
\boldsymbol{E}_n\boldsymbol{x},
\boldsymbol{F}_{\!n}\boldsymbol{y},
\boldsymbol{G}_n\boldsymbol{z},
\big(\sigma_{i,\ell_i(n)}^{-1}u_i^*\big)_{i\in I},
\big(\varrho_{k,\vartheta_k(n)}^{-1}v_k^*\big)_{k\in K}\Big)\\
\boldsymbol{\mathsf{K}}_n
=\boldsymbol{\mathsf{T}}_n-\boldsymbol{\mathsf{S}}.
\end{cases}
\end{equation}
Fix temporarily $n\in\NN$.
Then, using \ref{t:1a}, the Cauchy--Schwarz inequality,
and Problem~\ref{prob:1}\ref{prob:1a}, we obtain
\begin{align}
&(\forall\boldsymbol{x}\in\HHH)(\forall\boldsymbol{x}'\in\HHH)
\quad\scal{\boldsymbol{x}-\boldsymbol{x}'}{
\boldsymbol{E}_n\boldsymbol{x}-\boldsymbol{E}_n\boldsymbol{x}'}
\nonumber\\
&\hspace{42mm}
=\sum_{i\in I}\big(\gamma_{i,\ell_i(n)}^{-1}\|x_i-x_i'\|^2
-\scal{x_i-x_i'}{\nabla\psi_i(x_i)-\nabla\psi_i(x_i')}\big)
\nonumber\\
&\hspace{42mm}
\geq\sum_{i\in I}\big((\alpha_i+\eta)\|x_i-x_i'\|^2
-\|x_i-x_i'\|\,\|\nabla\psi_i(x_i)-\nabla\psi_i(x_i')\|\big)
\nonumber\\
&\hspace{42mm}
\geq\sum_{i\in I}\big((\alpha_i+\eta)\|x_i-x_i'\|^2
-\alpha_i\|x_i-x_i'\|^2\big)
\nonumber\\
&\hspace{42mm}
=\eta\|\boldsymbol{x}-\boldsymbol{x}'\|^2
\end{align}
and
\begin{align}
&(\forall\boldsymbol{x}\in\HHH)(\forall\boldsymbol{x}'\in\HHH)
\quad\|\boldsymbol{E}_n\boldsymbol{x}
-\boldsymbol{E}_n\boldsymbol{x}'\|^2
\nonumber\\
&\hspace{42mm}
=\sum_{i\in I}\big\|\gamma_{i,\ell_i(n)}^{-1}(x_i-x_i')
-\big(\nabla\psi_i(x_i)-\nabla\psi_i(x_i')\big)\big\|^2
\nonumber\\
&\hspace{42mm}
\leq 2\sum_{i\in I}\big(\gamma_{i,\ell_i(n)}^{-2}\|x_i-x_i'\|^2
+\|\nabla\psi_i(x_i)-\nabla\psi_i(x_i')\|^2\big)
\nonumber\\
&\hspace{42mm}
\leq 2\sum_{i\in I}\big(\varepsilon^{-2}\|x_i-x_i'\|^2
+\alpha_i^2\|x_i-x_i'\|^2\big)
\nonumber\\
&\hspace{42mm}
\leq\alpha^2\|\boldsymbol{x}-\boldsymbol{x}'\|^2.
\end{align}
Thus,
\begin{equation}
\label{e:8942a}
\text{$\boldsymbol{E}_n$ is $\eta$-strongly monotone and
$\alpha$-Lipschitzian}.
\end{equation}
Similarly,
\begin{equation}
\label{e:8942b}
\begin{cases}
\text{$\boldsymbol{F}_{\!n}$ is $\eta$-strongly monotone and
$\chi$-Lipschitzian}\\
\text{$\boldsymbol{G}_n$ is $\eta$-strongly monotone and
$\beta$-Lipschitzian}.
\end{cases}
\end{equation}
In turn, invoking \eqref{e:1372a},
\ref{t:1a}, \ref{t:1b}, and \eqref{e:1688}, we deduce that
$\boldsymbol{\mathsf{T}}_n$ is strongly monotone
with constant $\varsigma$ and Lipschitzian with constant
$\max\{\alpha,\beta,\chi,1/\varepsilon\}$.
It therefore follows from \eqref{e:1372a} and \eqref{e:1688} that
\begin{equation}
\label{e:5211}
\text{$\boldsymbol{\mathsf{K}}_n$ is $\varsigma$-strongly monotone
and $\varpi$-Lipschitzian}.
\end{equation}
Let us define
\begin{equation}
\label{e:1372c}
\begin{cases}
(\forall i\in I)\;\;E_{i,n}\colon\HH_i\to\HH_i\colon
x_i\mapsto\gamma_{i,\ell_i(n)}^{-1}x_i
-\nabla\psi_i(x_i)\\
(\forall k\in K)\;\;G_{k,n}\colon\GG_k\to\GG_k\colon
z_k\mapsto\nu_{k,\vartheta_k(n)}^{-1}z_k-\nabla h_k(z_k)
\end{cases}
\end{equation}
and let us introduce the variables
\begin{equation}
\label{e:xxyy}
\hspace{-2mm}
\begin{cases}
\boldsymbol{\mathsf{x}}_n
=(\boldsymbol{x}_n,\boldsymbol{y}_n,\boldsymbol{z}_n,
\boldsymbol{u}_n^*,\boldsymbol{v}_n^*),\;\:
\boldsymbol{\mathsf{y}}_n
=(\boldsymbol{a}_n,\boldsymbol{q}_n,\boldsymbol{b}_n,
\boldsymbol{c}_n^*,\boldsymbol{e}_n^*),\;\:
\boldsymbol{\mathsf{y}}_n^*
=(\boldsymbol{a}_n^*,\boldsymbol{q}_n^*,\boldsymbol{b}_n^*,
\boldsymbol{c}_n,\boldsymbol{e}_n)
\medskip\\
(\forall i\in I)\;\;
\begin{cases}
\widetilde{x}_{i,n}^*
=E_{i,n}x_{i,\ell_i(n)}-E_{i,n}x_n
+M_i^*\big(u_{i,n}^*-u_{i,\ell_i(n)}^*\big)
+\sum_{k\in K}L_{k,i}^*\big(v_{k,n}^*-v_{k,\ell_i(n)}^*\big)\\
\widetilde{q}_{i,n}^*
=\mu_{i,\ell_i(n)}^{-1}\big(y_{i,\ell_i(n)}
-y_{i,n}\big)+\pnabla{i}\boldsymbol{f}_{\!i}(\boldsymbol{y}_n)
-\pnabla{i}\boldsymbol{f}_{\!i}\big(\boldsymbol{y}_{\ell_i(n)}\big)
+u_{i,\ell_i(n)}^*-u_{i,n}^*\\
\widetilde{c}_{i,n}^*
=\sigma_{i,\ell_i(n)}^{-1}\big(u_{i,\ell_i(n)}^*-u_{i,n}^*\big)
+M_i\big(x_{i,\ell_i(n)}-x_{i,n}\big)-y_{i,\ell_i(n)}+y_{i,n}
\end{cases}
\medskip\\
(\forall k\in K)\;\;
\begin{cases}
\widetilde{d}_{k,n}^*
=G_{k,n}z_{k,\vartheta_k(n)}-G_{k,n}z_{k,n}
+v_{k,\vartheta_k(n)}^*-v_{k,n}^*\\
\widetilde{e}_{k,n}^*
=\varrho_{k,\vartheta_k(n)}^{-1}
\big(v_{k,\vartheta_k(n)}^*-v_{k,n}^*\big)
-z_{k,\vartheta_k(n)}+z_{k,n}
+\sum_{i\in I}L_{k,i}\big(x_{i,\vartheta_k(n)}-x_{i,n}\big)
\end{cases}
\medskip\\
\boldsymbol{\mathsf{e}}_n^*=(
\widetilde{\boldsymbol{x}}_n^*,
\widetilde{\boldsymbol{q}}_n^*,
\widetilde{\boldsymbol{d}}_n^*,
\widetilde{\boldsymbol{c}}_n^*,
\widetilde{\boldsymbol{e}}_n^*).
\end{cases}
\end{equation}
Note that, by \eqref{e:dai}, \eqref{e:4469a}, and \eqref{e:4469b},
we have
\begin{equation}
\label{e:1890}
\begin{cases}
(\forall i\in I)\;\;
q_{i,n}=q_{i,\overline{\ell}_i(n)},\;\:
c_{i,n}^*=c_{i,\overline{\ell}_i(n)}^*,\;\:
a_{i,n}=a_{i,\overline{\ell}_i(n)},\;\:
s_{i,n}^*=s_{i,\overline{\ell}_i(n)}^*,\;\:
c_{i,n}=c_{i,\overline{\ell}_i(n)}\\
(\forall k\in K)\;\;
b_{k,n}=b_{k,\overline{\vartheta}_k(n)},\;\:
e_{k,n}^*=e_{k,\overline{\vartheta}_k(n)}^*,\;\:
b_{k,n}^*=b_{k,\overline{\vartheta}_k(n)}^*.
\end{cases}
\end{equation}
Hence, for every $i\in I$, we deduce from 
\eqref{e:dai}, \eqref{e:4469a},
and \eqref{e:1372c} that
\begin{align}
\label{e:6986}
\gamma_{i,\ell_i(n)}^{-1}x_{i,\overline{\ell}_i(n)}^*
&=\gamma_{i,\ell_i(n)}^{-1}x_{i,\ell_i(n)}
-\nabla\psi_i\big(x_{i,\ell_i(n)}\big)-M_i^*u_{i,\ell_i(n)}^*
-\sum_{k\in K}L_{k,i}^*v_{k,\ell_i(n)}^*
\nonumber\\
&=E_{i,n}x_{i,\ell_i(n)}-M_i^*u_{i,\ell_i(n)}^*
-\sum_{k\in K}L_{k,i}^*v_{k,\ell_i(n)}^*
\nonumber\\
&=E_{i,n}x_{i,n}-M_i^*u_{i,n}^*-\sum_{k\in K}L_{k,i}^*v_{k,n}^*
+\widetilde{x}_{i,n}^*,
\end{align}
that
\begin{align}
\label{e:6987}
\mu_{i,\ell_i(n)}^{-1}q_{i,n}
&=\mu_{i,\ell_i(n)}^{-1}q_{i,\overline{\ell}_i(n)}
\nonumber\\
&=\mu_{i,\ell_i(n)}^{-1}y_{i,\ell_i(n)}
-\pnabla{i}\boldsymbol{f}_{\!i}\big(
\boldsymbol{y}_{\ell_i(n)}\big)
+u_{i,\ell_i(n)}^*
\nonumber\\
&=\mu_{i,\ell_i(n)}^{-1}y_{i,n}
-\pnabla{i}\boldsymbol{f}_{\!i}(\boldsymbol{y}_n)
+u_{i,n}^*+\widetilde{q}_{i,n}^*,
\end{align}
and that
\begin{align}
\label{e:6988}
\sigma_{i,\ell_i(n)}^{-1}c_{i,n}^*
&=\sigma_{i,\ell_i(n)}^{-1}c_{i,\overline{\ell}_i(n)}^*
\nonumber\\
&=\sigma_{i,\ell_i(n)}^{-1}u_{i,\ell_i(n)}^*
-y_{i,\ell_i(n)}+M_ix_{i,\ell_i(n)}
\nonumber\\
&=\sigma_{i,\ell_i(n)}^{-1}u_{i,n}^*-y_{i,n}+M_ix_{i,n}
+\widetilde{c}_{i,n}^*.
\end{align}
In a similar fashion,
\begin{equation}
\label{e:6989}
(\forall k\in K)\quad
\begin{cases}
\nu_{k,\vartheta_k(n)}^{-1}d_{k,\overline{\vartheta}_k(n)}^*
=G_{k,n}z_{k,n}+v_{k,n}^*+\widetilde{d}_{k,n}^*\\
\varrho_{k,\vartheta_k(n)}^{-1}e_{k,n}^*
=\varrho_{k,\vartheta_k(n)}^{-1}v_{k,n}^*
-z_{k,n}+\sum_{i\in I}L_{k,i}x_{i,n}
+\widetilde{e}_{k,n}^*.
\end{cases}
\end{equation}
Therefore, it results from \eqref{e:1372a}, \eqref{e:1372c},
\eqref{e:xxyy}, \eqref{e:7335}, \eqref{e:8792}, \eqref{e:3851},
and Problem~\ref{prob:1}\ref{prob:1c} that
\begin{align}
\label{e:6705}
&\boldsymbol{\mathsf{K}}_n\boldsymbol{\mathsf{x}}_n
+\boldsymbol{\mathsf{e}}_n^*
\nonumber\\
&\hspace{2mm}
=\boldsymbol{\mathsf{T}}_n\boldsymbol{\mathsf{x}}_n
-\boldsymbol{\mathsf{S}}\boldsymbol{\mathsf{x}}_n
+\boldsymbol{\mathsf{e}}_n^*
\nonumber\\
&\hspace{2mm}
=\Big(
\big(\gamma_{i,\ell_i(n)}^{-1}x_{i,\overline{\ell}_i(n)}^*
\big)_{i\in I},
\big(\mu_{i,\ell_i(n)}^{-1}q_{i,n}\big)_{i\in I},
\big(\nu_{k,\vartheta_k(n)}^{-1}d_{k,\overline{\vartheta}_k(n)}^*
\big)_{k\in K},
\big(\sigma_{i,\ell_i(n)}^{-1}c_{i,n}^*\big)_{i\in I},
\big(\varrho_{k,\vartheta_k(n)}^{-1}e_{k,n}^*\big)_{k\in K}\Big).
\end{align}
On the other hand, in the light of \eqref{e:1372a},
\eqref{e:6873}, \eqref{e:6615}, \eqref{e:3851},
and \cite[Proposition~16.44]{Livre1} we get
\begin{multline}
\label{e:4211}
(\boldsymbol{\mathsf{K}}_n+\lag)^{-1}\colon\XXX\to\XXX\colon
(\boldsymbol{x}^*,\boldsymbol{y}^*,\boldsymbol{z}^*,
\boldsymbol{u},\boldsymbol{v})\mapsto
\Big(\big(\prox_{\gamma_{i,\ell_i(n)}\varphi_i}
(\gamma_{i,\ell_i(n)}x_i^*)\big)_{i\in I},
\big(\mu_{i,\ell_i(n)}y_i^*\big)_{i\in I},\\
\big(\prox_{\nu_{k,\vartheta_k(n)}g_k}
(\nu_{k,\vartheta_k(n)}z_k^*)\big)_{k\in K},
\big(\sigma_{i,\ell_i(n)}u_i\big)_{i\in I},
\big(\varrho_{k,\vartheta_k(n)}v_k\big)_{k\in K}\Big).
\end{multline}
Hence, since \eqref{e:1890}, \eqref{e:dai}, \eqref{e:4469a},
and \eqref{e:4469b} entail that
\begin{equation}
\label{e:6990}
\begin{cases}
(\forall i\in I)\;\;a_{i,n}
=a_{i,\overline{\ell}_i(n)}
=\prox_{\gamma_{i,\ell_i(n)}\varphi_i}x_{i,\overline{\ell}_i(n)}^*
\\
(\forall k\in K)\;\;b_{k,n}
=b_{k,\overline{\vartheta}_k(n)}
=\prox_{\nu_{k,\vartheta_k(n)}g_k}
d_{k,\overline{\vartheta}_k(n)}^*,
\end{cases}
\end{equation}
we invoke \eqref{e:xxyy} to get
\begin{equation}
\label{e:7560}
\boldsymbol{\mathsf{y}}_n
=(\boldsymbol{\mathsf{K}}_n+\lag)^{-1}(
\boldsymbol{\mathsf{K}}_n\boldsymbol{\mathsf{x}}_n
+\boldsymbol{\mathsf{e}}_n^*).
\end{equation}
At the same time, it follows from \eqref{e:5211} and
\cite[Corollary~20.28 and Proposition~22.11(ii)]{Livre1}
that $\boldsymbol{\mathsf{K}}_n$ is surjective
and, in turn, that there exists
$\widetilde{\boldsymbol{\mathsf{x}}}_n\in\XXX$ such that
\begin{equation}
\label{e:6088}
\boldsymbol{\mathsf{K}}_n\widetilde{\boldsymbol{\mathsf{x}}}_n
=\boldsymbol{\mathsf{K}}_n\boldsymbol{\mathsf{x}}_n
+\boldsymbol{\mathsf{e}}_n^*.
\end{equation}
Thus, \eqref{e:7560} and Definition~\ref{d:warped} yield
\begin{equation}
\label{e:7561}
\boldsymbol{\mathsf{y}}_n
=J_{\lag}^{\boldsymbol{\mathsf{K}}_n}
\widetilde{\boldsymbol{\mathsf{x}}}_n.
\end{equation}
In view of \eqref{e:dai}, \eqref{e:1890}, and \eqref{e:4469a},
we derive from \eqref{e:6986} that
\begin{align}
\label{e:1174}
(\forall i\in I)\quad
a_{i,n}^*
&=s_{i,n}^*+\sum_{k\in K}L_{k,i}^*e_{k,n}^*
\nonumber\\
&=s_{i,\overline{\ell}_i(n)}^*+\sum_{k\in K}L_{k,i}^*e_{k,n}^*
\nonumber\\
&=\gamma_{i,\ell_i(n)}^{-1}\big(
x_{i,\overline{\ell}_i(n)}^*-a_{i,\overline{\ell}_i(n)}\big)
+\nabla\psi_i\big(a_{i,\overline{\ell}_i(n)}\big)
+M_i^*c_{i,\overline{\ell}_i(n)}^*+\sum_{k\in K}L_{k,i}^*e_{k,n}^*
\nonumber\\
&=\gamma_{i,\ell_i(n)}^{-1}x_{i,\overline{\ell}_i(n)}^*
-\Bigg(\gamma_{i,\ell_i(n)}^{-1}a_{i,n}-\nabla\psi_i(a_{i,n})
-M_i^*c_{i,n}^*-\sum_{k\in K}L_{k,i}^*e_{k,n}^*\Bigg)
\nonumber\\
&=\Bigg(E_{i,n}x_{i,n}-M_i^*u_{i,n}^*
-\sum_{k\in K}L_{k,i}^*v_{k,n}^*\Bigg)
\nonumber\\
&\quad\;
-\Bigg(E_{i,n}a_{i,n}-M_i^*c_{i,n}^*
-\sum_{k\in K}L_{k,i}^*e_{k,n}^*\Bigg)+\widetilde{x}_{i,n}^*,
\end{align}
from \eqref{e:6987} that
\begin{align}
\label{e:1175}
(\forall i\in I)\quad
q_{i,n}^*
&=\pnabla{i}\boldsymbol{f}_{\!i}(\boldsymbol{q}_n)-c_{i,n}^*
\nonumber\\
&=\big(\mu_{i,\ell_i(n)}^{-1}y_{i,n}
-\pnabla{i}\boldsymbol{f}_{\!i}\big(\boldsymbol{y}_n\big)
+u_{i,n}^*\big)
-\big(\mu_{i,\ell_i(n)}^{-1}q_{i,n}
-\pnabla{i}\boldsymbol{f}_{\!i}(\boldsymbol{q}_n)
+c_{i,n}^*\big)
+\widetilde{q}_{i,n}^*,
\end{align}
and from \eqref{e:6988} that
\begin{align}
\label{e:1176}
(\forall i\in I)\quad
c_{i,n}
&=c_{i,\overline{\ell}_i(n)}
\nonumber\\
&=q_{i,\overline{\ell}_i(n)}-M_ia_{i,\overline{\ell}_i(n)}
\nonumber\\
&=q_{i,n}-M_ia_{i,n}
\nonumber\\
&=\big(\sigma_{i,\ell_i(n)}^{-1}u_{i,n}^*-y_{i,n}+M_ix_{i,n}\big)
-\big(\sigma_{i,\ell_i(n)}^{-1}c_{i,n}^*-q_{i,n}
+M_ia_{i,n}\big)+\widetilde{c}_{i,n}^*.
\end{align}
A similar analysis shows that
\begin{equation}
\label{e:1177}
(\forall k\in K)\quad
b_{k,n}^*=\big(G_{k,n}z_{k,n}+v_{k,n}^*\big)-
\big(G_{k,n}b_{k,n}+e_{k,n}^*\big)+\widetilde{d}_{k,n}^*
\end{equation}
and
\begin{equation}
\label{e:1178}
(\forall k\in K)\quad
e_{k,n}=\Bigg(\varrho_{k,\vartheta_k(n)}^{-1}v_{k,n}^*
-z_{k,n}+\sum_{i\in I}L_{k,i}x_{i,n}\Bigg)
-\Bigg(\varrho_{k,\vartheta_k(n)}^{-1}e_{k,n}^*
-b_{k,n}+\sum_{i\in I}L_{k,i}a_{i,n}\Bigg)+\widetilde{e}_{k,n}^*.
\end{equation}
Altogether, it follows from \eqref{e:xxyy},
\eqref{e:1174}--\eqref{e:1178}, \eqref{e:1372c},
\eqref{e:1372a}, \eqref{e:7335}, \eqref{e:8792},
\eqref{e:3851}, and \eqref{e:6088} that
\begin{equation}
\label{e:7561h}
\boldsymbol{\mathsf{y}}_n^*
=(\boldsymbol{\mathsf{T}}_n\boldsymbol{\mathsf{x}}_n
-\boldsymbol{\mathsf{S}}\boldsymbol{\mathsf{x}}_n)
-(\boldsymbol{\mathsf{T}}_n\boldsymbol{\mathsf{y}}_n
-\boldsymbol{\mathsf{S}}\boldsymbol{\mathsf{y}}_n)
+\boldsymbol{\mathsf{e}}_n^*
=\boldsymbol{\mathsf{K}}_n\boldsymbol{\mathsf{x}}_n
-\boldsymbol{\mathsf{K}}_n\boldsymbol{\mathsf{y}}_n
+\boldsymbol{\mathsf{e}}_n^*
=\boldsymbol{\mathsf{K}}_n\widetilde{\boldsymbol{\mathsf{x}}}_n
-\boldsymbol{\mathsf{K}}_n\boldsymbol{\mathsf{y}}_n.
\end{equation}
Further, in view of \eqref{e:dai} and \eqref{e:xxyy}, we have
\begin{equation}
\label{e:1171}
\pi_n=\scal{\boldsymbol{\mathsf{y}}_n
-\boldsymbol{\mathsf{x}}_n}{\boldsymbol{\mathsf{y}}_n^*}
\quad\text{and}\quad
\boldsymbol{\mathsf{x}}_{n+1}=
\begin{cases}
\displaystyle
\boldsymbol{\mathsf{x}}_n+\frac{\lambda_n\pi_n}{
\|\boldsymbol{\mathsf{y}}_n^*\|^2}\,\boldsymbol{\mathsf{y}}_n^*,
&\text{if}\;\:\pi_n<0;\\
\boldsymbol{\mathsf{x}}_n,
&\text{otherwise}.
\end{cases}
\end{equation}
Combining \eqref{e:6873}, \eqref{e:zerL}, \eqref{e:5211},
\eqref{e:7561}, \eqref{e:7561h}, and \eqref{e:1171},
we conclude that \eqref{e:dai} is an instantiation of
\eqref{e:8824}. Hence, Proposition~\ref{p:5}\ref{p:5i} yields
\begin{equation}
\label{e:5388}
\sum_{n\in\NN}\|\boldsymbol{\mathsf{x}}_{n+1}
-\boldsymbol{\mathsf{x}}_n\|^2<\pinf.
\end{equation}
For every $i\in I$ and every integer $n\geq P$,
\eqref{e:1093} entails that $i\in\bigcup_{j=n-P}^nI_j$
and, in turn, \eqref{e:4469a} and \ref{t:1a} imply that
$n-P-D\leq\overline{\ell}_i(n)-D
\leq\tau_i(\overline{\ell}_i(n))=\ell_i(n)
\leq\overline{\ell}_i(n)\leq n$. Consequently,
\begin{align}
(\forall i\in I)(\forall n\in\NN)\quad
n\geq P+D\quad\Rightarrow\quad
\|\boldsymbol{\mathsf{x}}_n-\boldsymbol{\mathsf{x}}_{\ell_i(n)}\|
\leq\sum_{j=0}^{P+D}
\|\boldsymbol{\mathsf{x}}_n-\boldsymbol{\mathsf{x}}_{n-j}\|,
\end{align}
and we therefore infer from \eqref{e:5388} that
\begin{equation}
\label{e:4640a}
(\forall i\in I)\quad
\boldsymbol{\mathsf{x}}_n-\boldsymbol{\mathsf{x}}_{\ell_i(n)}
\to\boldsymbol{\mathsf{0}}.
\end{equation}
Likewise,
\begin{equation}
\label{e:4640b}
(\forall k\in K)\quad
\boldsymbol{\mathsf{x}}_n-\boldsymbol{\mathsf{x}}_{\vartheta_k(n)}
\to\boldsymbol{\mathsf{0}}.
\end{equation}
Hence, we deduce from \eqref{e:xxyy}, \eqref{e:1372c},
\eqref{e:1372a}, and \eqref{e:8942a} that
\begin{align}
\label{e:1503}
(\forall i\in I)\quad
\|\widetilde{x}_{i,n}^*\|
&\leq\|E_{i,n}x_{i,\ell_i(n)}-E_{i,n}x_n\|
+\|M_i^*\|\,\|u_{i,n}^*-u_{i,\ell_i(n)}^*\|
\nonumber\\
&\quad\;+
\sum_{k\in K}\|L_{k,i}^*\|\,\|v_{k,n}^*-v_{k,\ell_i(n)}^*\|
\nonumber\\
&\leq\|\boldsymbol{E}_n\boldsymbol{x}_{\ell_i(n)}
-\boldsymbol{E}_n\boldsymbol{x}_n\|
+\|M_i^*\|\,\|\boldsymbol{u}_n^*-\boldsymbol{u}_{\ell_i(n)}^*\|
+\sum_{k\in K}\|L_{k,i}^*\|\,
\|\boldsymbol{v}_n^*-\boldsymbol{v}_{\ell_i(n)}^*\|
\nonumber\\
&\leq\alpha\|\boldsymbol{x}_{\ell_i(n)}-\boldsymbol{x}_n\|
+\|M_i^*\|\,\|\boldsymbol{u}_n^*-\boldsymbol{u}_{\ell_i(n)}^*\|
+\sum_{k\in K}\|L_{k,i}^*\|\,
\|\boldsymbol{v}_n^*-\boldsymbol{v}_{\ell_i(n)}^*\|
\nonumber\\
&\to 0.
\end{align}
Moreover, using \eqref{e:xxyy}, \ref{t:1a}, and \eqref{e:4640a}, we
get
\begin{align}
\label{e:1504}
(\forall i\in I)\quad
\|\widetilde{q}_{i,n}^*\|
&\leq\mu_{i,\ell_i(n)}^{-1}\|y_{i,\ell_i(n)}-y_{i,n}\|
+\big\|\pnabla{i}\boldsymbol{f}_{\!i}(\boldsymbol{y}_n)
-\pnabla{i}\boldsymbol{f}_{\!i}
\big(\boldsymbol{y}_{\ell_i(n)}\big)\big\|
+\|u_{i,\ell_i(n)}^*-u_{i,n}^*\|
\nonumber\\
&\leq\varepsilon^{-1}\|\boldsymbol{y}_{\ell_i(n)}
-\boldsymbol{y}_n\|
+\|\boldsymbol{Q}\boldsymbol{y}_n
-\boldsymbol{Q}\boldsymbol{y}_{\ell_i(n)}\|
+\|\boldsymbol{u}_{\ell_i(n)}^*-\boldsymbol{u}_n^*\|
\nonumber\\
&\leq(\varepsilon^{-1}+\kappa)\|\boldsymbol{y}_{\ell_i(n)}
-\boldsymbol{y}_n\|
+\|\boldsymbol{u}_{\ell_i(n)}^*-\boldsymbol{u}_n^*\|
\nonumber\\
&\to 0
\end{align}
and
\begin{align}
\label{e:1505}
(\forall i\in I)\quad\|\widetilde{c}_{i,n}^*\|
&\leq
\sigma_{i,\ell_i(n)}^{-1}\|u_{i,\ell_i(n)}^*-u_{i,n}^*\|
+\|M_i\|\,\|x_{i,\ell_i(n)}-x_{i,n}\|
+\|y_{i,\ell_i(n)}-y_{i,n}\|
\nonumber\\
&\leq\varepsilon^{-1}\|\boldsymbol{u}_{\ell_i(n)}^*
-\boldsymbol{u}_n^*\|
+\|M_i\|\,\|\boldsymbol{x}_{\ell_i(n)}-\boldsymbol{x}_n\|
+\|\boldsymbol{y}_{\ell_i(n)}-\boldsymbol{y}_n\|
\nonumber\\
&\to 0.
\end{align}
A similar analysis shows that
\begin{equation}
\label{e:1506}
(\forall k\in K)\quad
\|\widetilde{d}_{k,n}^*\|\to 0\quad\text{and}\quad
\|\widetilde{e}_{k,n}^*\|\to 0.
\end{equation}
Altogether, we invoke \eqref{e:xxyy} and
\eqref{e:1503}--\eqref{e:1506} to get
\begin{equation}
\label{e:e}
\boldsymbol{\mathsf{e}}_n^*\to\boldsymbol{\mathsf{0}}.
\end{equation}
Hence, arguing as in \eqref{e:1878}, \eqref{e:5211} 
and \eqref{e:6088} give
\begin{equation}
\label{e;5t}
\|\widetilde{\boldsymbol{\mathsf{x}}}_n
-\boldsymbol{\mathsf{x}}_n\|
\leq\frac{
\|\boldsymbol{\mathsf{K}}_n\widetilde{\boldsymbol{\mathsf{x}}}_n
-\boldsymbol{\mathsf{K}}_n\boldsymbol{\mathsf{x}}_n\|}{\varsigma}
=\frac{\|\boldsymbol{\mathsf{e}}_n^*\|}{\varsigma}\to 0.
\end{equation}
Hence, Proposition~\ref{p:5}\ref{p:5ii} asserts that there exists
$\overline{\boldsymbol{\mathsf{x}}}=
(\overline{\boldsymbol{x}},\overline{\boldsymbol{y}},
\overline{\boldsymbol{z}},\overline{\boldsymbol{u}}^*,
\overline{\boldsymbol{v}}^*)\in\zer\lag$ such that
$\boldsymbol{\mathsf{x}}_n\weakly
\overline{\boldsymbol{\mathsf{x}}}$. This yields
$\boldsymbol{x}_n\weakly\overline{\boldsymbol{x}}$.
It remains to verify that $\overline{\boldsymbol{x}}$ solves
\eqref{e:nash}. Towards this end, let $i\in I$ and set
\begin{equation}
\label{e:8924}
f_i=\boldsymbol{f}_{\!i}\big(\mute;
(\boldsymbol{M}\overline{\boldsymbol{x}})_{\smallsetminus i}\big)
\quad\text{and}\quad
(\forall k\in K)\;\;
\widetilde{g}_k=(g_k+h_k)(\mute+\boldsymbol{L}_{k,\smallsetminus i}
\overline{\boldsymbol{x}}_{\smallsetminus i}).
\end{equation}
Then, by Problem~\ref{prob:1}\ref{prob:1d},
$f_i\colon\KK_i\to\RR$ is convex and G\^ateaux differentiable,
with $\nabla f_i(M_i\overline{x}_i)=
\pnabla{i}\boldsymbol{f}_{\!i}
(\boldsymbol{M}\overline{\boldsymbol{x}})$.
In addition, $(\forall k\in K)(\forall z_k\in\GG_k)$
$\partial\widetilde{g}_k(z_k)
=(\partial g_k+\nabla h_k)(z_k+\boldsymbol{L}_{k,\smallsetminus i}
\overline{\boldsymbol{x}}_{\smallsetminus i})$.
At the same time, we deduce from \eqref{e:lag} that
$\overline{\boldsymbol{u}}^*
=\boldsymbol{Q}\overline{\boldsymbol{y}}
=\boldsymbol{Q}(\boldsymbol{M}\overline{\boldsymbol{x}})$,
$\overline{\boldsymbol{z}}
=\boldsymbol{L}\overline{\boldsymbol{x}}$,
$\overline{\boldsymbol{v}}^*
\in\boldsymbol{B}\overline{\boldsymbol{z}}$, and
$\boldsymbol{0}\in\boldsymbol{A}\overline{\boldsymbol{x}}
+\boldsymbol{M}^*\overline{\boldsymbol{u}}^*
+\boldsymbol{L}^*\overline{\boldsymbol{v}}^*$.
Thus, it results from \eqref{e:3851} and
Problem~\ref{prob:1}\ref{prob:1c} that
\begin{equation}
\begin{cases}
\overline{u}_i^*=\pnabla{i}\boldsymbol{f}_{\!i}
(\boldsymbol{M}\overline{\boldsymbol{x}})
=\nabla f_i(M_i\overline{x}_i)\\
(\forall k\in K)\;\;\overline{z}_k
=\sum_{j\in I}L_{k,j}\overline{x}_j
=L_{k,i}\overline{x}_i
+\boldsymbol{L}_{k,\smallsetminus i}
\overline{\boldsymbol{x}}_{\smallsetminus i}\\
(\forall k\in K)\;\;\overline{v}_k^*\in
\partial g_k(\overline{z}_k)+\nabla h_k(\overline{z}_k)
=(\partial g_k+\nabla h_k)(L_{k,i}\overline{x}_i
+\boldsymbol{L}_{k,\smallsetminus i}
\overline{\boldsymbol{x}}_{\smallsetminus i})
=\partial\widetilde{g}_k(L_{k,i}\overline{x}_i)
\end{cases}
\end{equation}
and, in turn, from \eqref{e:8792} and
\cite[Proposition~16.6(ii)]{Livre1} that
\begin{align}
0&\in\partial\varphi_i(\overline{x}_i)+\nabla\psi_i(\overline{x}_i)
+M_i^*\overline{u}_i^*+\sum_{k\in K}L_{k,i}^*\overline{v}_k^*
\nonumber\\
&\subset\partial\varphi_i(\overline{x}_i)
+\nabla\psi_i(\overline{x}_i)
+M_i^*\big(\nabla f_i(M_i\overline{x}_i)\big)
+\sum_{k\in K}L_{k,i}^*\big(
\partial\widetilde{g}_k(L_{k,i}\overline{x}_i)\big)
\nonumber\\
&\subset\partial\Bigg(\varphi_i+\psi_i
+f_i\circ M_i+\sum_{k\in K}\widetilde{g}_k\circ L_{k,i}\Bigg)
(\overline{x}_i).
\end{align}
Consequently, appealing to Fermat's rule
\cite[Theorem~16.3]{Livre1} and \eqref{e:8924}, we arrive at
\begin{align}
\overline{x}_i
&\in\Argmind{x_i\in\HH_i}{
\varphi_i(x_i)+\psi_i(x_i)+f_i(M_ix_i)
+\sum_{k\in K}\widetilde{g}_k(L_{k,i}x_i)}
\nonumber\\
&=\Argmind{x_i\in\HH_i}{
\varphi_i(x_i)+\psi_i(x_i)+\boldsymbol{f}_{\!i}\big(M_ix_i;
(\boldsymbol{M}\overline{\boldsymbol{x}})_{\smallsetminus i}\big)
+\sum_{k\in K}(g_k+h_k)(L_{k,i}x_i
+\boldsymbol{L}_{k,\smallsetminus i}
\overline{\boldsymbol{x}}_{\smallsetminus i})},
\end{align}
which completes the proof.
\end{proof}

\begin{remark}
\label{r:8}
Let us confirm that algorithm \eqref{e:dai} complies with the
principles laid out in Section~\ref{sec:1}.
\begin{itemize}
\item
{\bfseries Decomposition:}
In \eqref{e:dai}, the nonsmooth functions
$(\varphi_i)_{i\in I}$ and $(g_k)_{k\in K}$
are activated separately via their proximity operators,
while the smooth functions
$(\psi_i)_{i\in I}$, $(\boldsymbol{f}_{\!i})_{i\in I}$,
and $(h_k)_{k\in K}$ are activated separately via their gradients.
\item
{\bfseries Block-iterative implementation:}
At any iteration $n$, the functions 
$(\boldsymbol{f}_{\!i})_{i\in I}$ are activated and we require only
that the subfamilies $(\varphi_i)_{i\in I_n}$, 
$(\psi_i)_{i\in I_n}$, $(g_k)_{k\in K_n}$, and $(h_k)_{k\in K_n}$
be used. To guarantee convergence, we ask in 
condition~\ref{t:1c} of Theorem~\ref{t:1} that each of these
functions be activated frequently enough.
\item
{\bfseries Asynchronous implementation:}
Given $i\in I$ and $k\in K$, the asynchronous character of the
algorithm is materialized by the variables $\tau_i(n)$ and
$\delta_k(n)$ which signal when the underlying computations
incorporated at iteration $n$ were initiated. 
Conditions~\ref{t:1a} and \ref{t:1b} of Theorem~\ref{t:1} ask that
the lag between the initiation and the incorporation of such
computations do not exceed $D$ iterations. The introduction of such
techniques in monotone operator splitting were initiated in
\cite{MaPr18}.
\end{itemize}
\end{remark}

\begin{remark}
\label{r:3}
Consider the proof of Theorem~\ref{t:1}. Since 
Proposition~\ref{p:5}\ref{p:5ii} yields
$\boldsymbol{\mathsf{x}}_n-\boldsymbol{\mathsf{y}}_n\to
\boldsymbol{\mathsf{0}}$, we obtain 
$\boldsymbol{x}_n-\boldsymbol{a}_n\to\boldsymbol{0}$
via \eqref{e:xxyy} and thus 
$\boldsymbol{a}_n\weakly\overline{\boldsymbol{x}}$.
At the same time, by \eqref{e:dai}, given $i\in I$, the sequence
$(a_{i,n})_{n\in\NN}$ lies in 
$\dom\partial\varphi_i\subset\dom\varphi_i$.
In particular, if a constraint on $\overline{x}_i$ is enforced via
$\varphi_i=\iota_{C_i}$, then $(a_{i,n})_{n\in\NN}$ converges 
to the $i$th component of a solution $\overline{\boldsymbol{x}}$
while being feasible in the sense that 
$C_i\ni a_{i,n}\weakly\overline{x}_i$.
\end{remark}

\begin{remark}
The proof of Theorem~\ref{t:1} implicitly establishes the
convergence of an asynchronous block-iterative algorithm to solve
the more general system of monotone inclusions
\begin{multline}
\label{e:2p}
\text{find}\;\:\overline{\boldsymbol{x}}\in\HHH\;\:
\text{such that}\\
(\forall i\in I)\;\;0\in A_i\overline{x}_i+R_i\overline{x}_i
+M_i^*\big(Q_i(\boldsymbol{M}\overline{\boldsymbol{x}})\big)
+\Sum_{k\in K}L_{k,i}^*\Bigg((B_k+D_k)
\Bigg(\Sum_{j\in I}L_{k,j}\overline{x}_j\Bigg)\Bigg)
\end{multline}
under the following assumptions:
\begin{enumerate}[label={\rm[\alph*]}]
\item
For every $i\in I$, $A_i\colon\HH_i\to 2^{\HH_i}$
is maximally monotone, $\alpha_i\in\RP$,
and $R_i\colon\HH_i\to\HH_i$ is monotone and
$\alpha_i$-Lipschitzian.
\item
For every $i\in I$, $Q_i\colon\KKK\to\KK_i$. It is assumed that
the operator $\boldsymbol{Q}\colon\KKK\to\KKK\colon
\boldsymbol{y}\mapsto(Q_i\boldsymbol{y})_{i\in I}$
is monotone and Lipschitzian. Furthermore, $(\chi_i)_{i\in I}$ 
are positive numbers such that
\begin{equation}
(\forall\boldsymbol{y}\in\KKK)(\forall\boldsymbol{y}'\in\KKK)
\quad\scal{\boldsymbol{y}-\boldsymbol{y}'}
{\boldsymbol{Q}\boldsymbol{y}-\boldsymbol{Q}\boldsymbol{y}'}
\leq\sum_{i\in I}\chi_i\|y_i-y_i'\|^2.
\end{equation}
\item
For every $k\in K$, $B_k\colon\GG_k\to 2^{\GG_k}$ is maximally
monotone, $\beta_k\in\RP$, and $D_k\colon\GG_k\to\GG_k$ is 
monotone and $\beta_k$-Lipschitzian.
\item
For every $i\in I$ and every $k\in K$,
$M_i\colon\HH_i\to\KK_i$ and $L_{k,i}\colon\HH_i\to\GG_k$
are linear and bounded. Moreover, we set
$\boldsymbol{M}\colon\HHH\to\KKK\colon
\boldsymbol{x}\mapsto(M_ix_i)_{i\in I}$.
\end{enumerate}
Indeed, denote by $\KT$ the set of points
$(\boldsymbol{x},\boldsymbol{u}^*,\boldsymbol{v}^*)\in
\HHH\oplus\KKK\oplus\GGG$ such that
\begin{equation}
\begin{cases}
(\forall i\in I)\;\;u_i^*=Q_i(\boldsymbol{M}\boldsymbol{x})\\
(\forall k\in K)\;\;v_k^*\in
(B_k+D_k)\big(\sum_{j\in I}L_{k,j}x_j\big)\\
(\forall i\in I)\;\;{-}M_i^*u_i^*
-\sum_{k\in K}L_{k,i}^*v_k^*\in A_ix_i+R_ix_i.
\end{cases}
\end{equation}
Suppose that $\KT\neq\emp$
and execute \eqref{e:dai} with the following modifications:
\begin{itemize}
\item
For every $i\in I$ and every $n\in\NN$,
$\prox_{\gamma_{i,n}\varphi_i}$ is replaced by
$J_{\gamma_{i,n}A_i}^{\Id}$, $\nabla\psi_i$ by $R_i$, and
$\pnabla{i}\boldsymbol{f}_{\!i}$ by $Q_i$.
\item
For every $k\in K$ and every $n\in\NN$,
$\prox_{\nu_{k,n}g_k}$ is replaced by $J_{\nu_{k,n}B_k}^{\Id}$, 
and $\nabla h_k$ by $D_k$.
\end{itemize}
Then there exists
$(\overline{\boldsymbol{x}},\overline{\boldsymbol{u}}^*,
\overline{\boldsymbol{v}}^*)\in\KT$ such that
$(\boldsymbol{x}_n,\boldsymbol{u}_n^*,
\boldsymbol{v}_n^*)\weakly
(\overline{\boldsymbol{x}},\overline{\boldsymbol{u}}^*,
\overline{\boldsymbol{v}}^*)$ and $\overline{\boldsymbol{x}}$
solves \eqref{e:2p}.
\end{remark}

\begin{remark}
By invoking \cite[Theorem~4.8]{Jmaa20} and arguing as in the proof
of Proposition~\ref{p:5}, we obtain a strongly convergent
counterpart of Proposition~\ref{p:5} which, in turn, yields
a strongly convergent version of Theorem~\ref{t:1}.
\end{remark}

Theorem~\ref{t:1} requires that \eqref{e:7592} be satisfied.  With
the assistance of monotone operator theory arguments applied to a
set of primal-dual inclusions, we provide below sufficient
conditions for that. Let us start with a technical fact.

\begin{lemma}
\label{l:3m}
Let $\HHH$ and $\GGG$ be real Hilbert spaces,
let $\boldsymbol{B}\colon\GGG\to 2^{\GGG}$ be $3^*$ monotone,
and let $\boldsymbol{L}\colon\HHH\to\GGG$ be linear and bounded.
Then $\boldsymbol{L}^*\circ\boldsymbol{B}\circ\boldsymbol{L}$
is $3^*$ monotone.
\end{lemma}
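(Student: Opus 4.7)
The plan is to unfold the definition of $3^*$ monotonicity for the composition and reduce it directly to the $3^*$ monotonicity of $\boldsymbol{B}$ by pushing inner products through the adjoint $\boldsymbol{L}^*$.

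First, set $\boldsymbol{T}=\boldsymbol{L}^*\circ\boldsymbol{B}\circ\boldsymbol{L}$. By the definition of composition with a single-valued linear operator, $\gra\boldsymbol{T}=\menge{(x,\boldsymbol{L}^*x^*)}{x\in\HHH,\;\boldsymbol{L}x\in\dom\boldsymbol{B},\;x^*\in\boldsymbol{B}(\boldsymbol{L}x)}$. I would begin by fixing $x\in\dom\boldsymbol{T}$ and $x^*\in\ran\boldsymbol{T}$. These choices furnish an element $u\in\HHH$ and a vector $u^*\in\boldsymbol{B}(\boldsymbol{L}u)$ with $x^*=\boldsymbol{L}^*u^*$, and they place $\boldsymbol{L}x$ in $\dom\boldsymbol{B}$ and $u^*$ in $\ran\boldsymbol{B}$.

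Next, for any $(y,y^*)\in\gra\boldsymbol{T}$, write $y^*=\boldsymbol{L}^*v^*$ for some $v^*\in\boldsymbol{B}(\boldsymbol{L}y)$. The core computation is the routine adjoint manipulation
\begin{equation*}
\scal{x-y}{y^*-x^*}=\scal{x-y}{\boldsymbol{L}^*v^*-\boldsymbol{L}^*u^*}=\scal{\boldsymbol{L}x-\boldsymbol{L}y}{v^*-u^*},
\end{equation*}
which expresses the quantity of interest purely in terms of pairs in $\gra\boldsymbol{B}$ and the fixed element $u^*\in\ran\boldsymbol{B}$.

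Finally, invoke the $3^*$ monotonicity of $\boldsymbol{B}$ at $(\boldsymbol{L}x,u^*)\in\dom\boldsymbol{B}\times\ran\boldsymbol{B}$ to conclude that
\begin{equation*}
\sup_{(y,y^*)\in\gra\boldsymbol{T}}\scal{x-y}{y^*-x^*}\le\sup_{(w,w^*)\in\gra\boldsymbol{B}}\scal{\boldsymbol{L}x-w}{w^*-u^*}<\pinf,
\end{equation*}
where the first inequality holds because every $(y,y^*)\in\gra\boldsymbol{T}$ generates an admissible test pair $(\boldsymbol{L}y,v^*)\in\gra\boldsymbol{B}$. This establishes the $3^*$ monotonicity of $\boldsymbol{T}$.

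There is no real obstacle: the only point requiring a moment of care is the bookkeeping between $\dom\boldsymbol{T}$, $\ran\boldsymbol{T}$, and their images/preimages under $\boldsymbol{L}$ and $\boldsymbol{L}^*$, to make sure that the witnesses $\boldsymbol{L}x$ and $u^*$ really do belong to $\dom\boldsymbol{B}$ and $\ran\boldsymbol{B}$, respectively, so that the $3^*$ monotonicity of $\boldsymbol{B}$ is applicable to the pair picked out in the reduction.
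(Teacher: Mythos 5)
Your argument is correct and coincides with the paper's proof: both reduce the supremum over $\gra(\boldsymbol{L}^*\circ\boldsymbol{B}\circ\boldsymbol{L})$ to a supremum over $\gra\boldsymbol{B}$ via the identity $\scal{x-y}{\boldsymbol{L}^*v^*-\boldsymbol{L}^*u^*}=\scal{\boldsymbol{L}x-\boldsymbol{L}y}{v^*-u^*}$ and then invoke the $3^*$ monotonicity of $\boldsymbol{B}$ at the pair $(\boldsymbol{L}x,u^*)\in\dom\boldsymbol{B}\times\ran\boldsymbol{B}$. The one detail the paper includes that you omit is the explicit verification that $\boldsymbol{L}^*\circ\boldsymbol{B}\circ\boldsymbol{L}$ is monotone (the definition of $3^*$ monotonicity presupposes monotonicity); this follows from the same adjoint manipulation and should be stated.
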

\begin{proof}
Set $\boldsymbol{A}=
\boldsymbol{L}^*\circ\boldsymbol{B}\circ\boldsymbol{L}$.
First, we deduce from \cite[Proposition~20.10]{Livre1} that
$\boldsymbol{A}$ is monotone. Next, take
$\boldsymbol{x}\in\dom\boldsymbol{A}$ and $\boldsymbol{x}^*\in
\ran\boldsymbol{A}$. On the one hand,
$\boldsymbol{L}\boldsymbol{x}\in\dom\boldsymbol{B}$
and there exists $\boldsymbol{z}^*\in\ran\boldsymbol{B}$
such that $\boldsymbol{x}^*=\boldsymbol{L}^*\boldsymbol{z}^*$.
On the other hand, for every
$(\boldsymbol{y},\boldsymbol{y}^*)\in\gra\boldsymbol{A}$,
there exists $\boldsymbol{v}^*\in\GGG$ such that
$(\boldsymbol{L}\boldsymbol{y},\boldsymbol{v}^*)\in
\gra\boldsymbol{B}$ and
$\boldsymbol{y}^*=\boldsymbol{L}^*\boldsymbol{v}^*$,
from which we obtain
\begin{align}
\scal{\boldsymbol{x}-\boldsymbol{y}}{\boldsymbol{y}^*
-\boldsymbol{x}^*}
&=\scal{\boldsymbol{x}-\boldsymbol{y}}{
\boldsymbol{L}^*\boldsymbol{v}^*-
\boldsymbol{L}^*\boldsymbol{z}^*}
\nonumber\\
&=\scal{\boldsymbol{L}\boldsymbol{x}-
\boldsymbol{L}\boldsymbol{y}}{\boldsymbol{v}^*-\boldsymbol{z}^*}
\nonumber\\
&\leq\sup_{(\boldsymbol{w},\boldsymbol{w}^*)\in
\gra\boldsymbol{B}}\scal{\boldsymbol{L}\boldsymbol{x}-
\boldsymbol{w}}{\boldsymbol{w}^*-\boldsymbol{z}^*}.
\end{align}
Therefore, by $3^*$ monotonicity of $\boldsymbol{B}$,
\begin{equation}
\sup_{(\boldsymbol{y},\boldsymbol{y}^*)\in\gra\boldsymbol{A}}
\scal{\boldsymbol{x}-\boldsymbol{y}}{\boldsymbol{y}^*
-\boldsymbol{x}^*}
\leq\sup_{(\boldsymbol{w},\boldsymbol{w}^*)\in
\gra\boldsymbol{B}}\scal{\boldsymbol{L}\boldsymbol{x}-
\boldsymbol{w}}{\boldsymbol{w}^*-\boldsymbol{z}^*}
<\pinf.
\end{equation}
Consequently, $\boldsymbol{A}$ is $3^*$ monotone.
\end{proof}

\begin{proposition}
\label{p:9}
Consider the setting of Problem~\ref{prob:1} and set
\begin{equation}
\label{e:3038}
\boldsymbol{C}=\Menge{\Bigg(
\sum_{i\in I}L_{k,i}x_i-z_k\Bigg)_{k\in K}}{
(\forall i\in I)\;\;x_i\in\dom\varphi_i\;\:\text{and}\;\:
(\forall k\in K)\;\;z_k\in\dom g_k}.
\end{equation}
Suppose that $\boldsymbol{0}\in\sri\boldsymbol{C}$
and that one of the following is satisfied: 
\begin{enumerate}[label={\rm[\alph*]}]
\item 
\label{p:9a}
For every $i\in I$, one of the following holds:
\begin{enumerate}[label={\rm\arabic*/}]
\item
\label{p:9a1}
$\partial(\varphi_i+\psi_i)$ is surjective.
\item
\label{p:9a2}
$\varphi_i+\psi_i$ is supercoercive.
\item
\label{p:9a3}
$\dom\varphi_i$ is bounded.
\item
\label{p:9a4}
$\varphi_i+\psi_i$ is uniformly convex.
\end{enumerate}
\item
\label{p:9b}
$\boldsymbol{Q}$ is $3^*$ monotone and
one of the following holds:
\begin{enumerate}[label={\rm\arabic*/}]
\item
\label{p:9b1}
$\boldsymbol{M}^*\circ\boldsymbol{Q}\circ\boldsymbol{M}$
is surjective.
\item
\label{p:9b2}
$\boldsymbol{Q}$ is surjective and,
for every $i\in I$, $M_i$ is bijective.
\end{enumerate}
\end{enumerate}
Then \eqref{e:7592} holds.
\end{proposition}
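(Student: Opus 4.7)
The plan is to recast \eqref{e:7592} as the existence of a zero of the composite monotone operator
\begin{equation*}
\boldsymbol{T}:=\boldsymbol{A}+\boldsymbol{M}^{*}\boldsymbol{Q}\boldsymbol{M}
+\boldsymbol{L}^{*}\boldsymbol{B}\boldsymbol{L},
\end{equation*}
with $\boldsymbol{A},\boldsymbol{B},\boldsymbol{L}$ defined in \eqref{e:3851}. The reduction is immediate: any $\widehat{\boldsymbol{x}}\in\zer\boldsymbol{T}$ yields \eqref{e:7592} upon setting $\widehat{\boldsymbol{u}}^{*}=\boldsymbol{Q}(\boldsymbol{M}\widehat{\boldsymbol{x}})$ and extracting $\widehat{\boldsymbol{v}}^{*}\in\boldsymbol{B}(\boldsymbol{L}\widehat{\boldsymbol{x}})$ with $-\boldsymbol{M}^{*}\widehat{\boldsymbol{u}}^{*}-\boldsymbol{L}^{*}\widehat{\boldsymbol{v}}^{*}\in\boldsymbol{A}\widehat{\boldsymbol{x}}$, after which componentwise expansion through \eqref{e:3851}, \eqref{e:8792}, and Problem~\ref{prob:1}\ref{prob:1c} recovers the three displayed lines. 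So the entire task is to show $\zer\boldsymbol{T}\neq\emp$.

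Next I would verify that $\boldsymbol{T}$ is maximally monotone. Arguing exactly as in the proof of Theorem~\ref{t:1}, both $\boldsymbol{A}$ and $\boldsymbol{B}$ are maximally monotone, and the single-valued Lipschitzian operator $\boldsymbol{M}^{*}\boldsymbol{Q}\boldsymbol{M}$ is maximally monotone with full domain. Since $\dom\boldsymbol{A}$ is the product of $(\dom\varphi_{i})_{i\in I}$ and $\dom\boldsymbol{B}$ is the product of $(\dom g_{k})_{k\in K}$, the set $\boldsymbol{C}$ of \eqref{e:3038} coincides with $\dom\boldsymbol{B}-\boldsymbol{L}(\dom\boldsymbol{A})$, so the assumption $\boldsymbol{0}\in\sri\boldsymbol{C}$ is precisely the Attouch--Br\'ezis qualification that forces $\boldsymbol{A}+\boldsymbol{L}^{*}\boldsymbol{B}\boldsymbol{L}$ to be maximally monotone (via the composition/sum rules of \cite[Proposition~25.41 and Corollary~25.5]{Livre1}), and adding $\boldsymbol{M}^{*}\boldsymbol{Q}\boldsymbol{M}$ preserves maximality.

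To produce a zero I would run a Br\'ezis--Haraux range computation. All three summands of $\boldsymbol{T}$ are $3^{*}$ monotone: $\boldsymbol{A}$ because each $\partial\varphi_{i}$ is a subdifferential (\cite[Example~25.14]{Livre1}) and $\nabla\psi_{i}$ is Lipschitzian with full domain; $\boldsymbol{L}^{*}\boldsymbol{B}\boldsymbol{L}$ by Lemma~\ref{l:3m} (after noting that $\boldsymbol{B}$ inherits $3^{*}$ monotonicity from its subdifferential-plus-Lipschitz-gradient form); and $\boldsymbol{M}^{*}\boldsymbol{Q}\boldsymbol{M}$ either by Lemma~\ref{l:3m} directly in case~\ref{p:9b} or, in case~\ref{p:9a}, because it is maximally monotone with full domain (hence locally bounded, which implies $3^{*}$ monotonicity). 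Applying Br\'ezis--Haraux iteratively to $\boldsymbol{A}$ and $\boldsymbol{S}:=\boldsymbol{M}^{*}\boldsymbol{Q}\boldsymbol{M}+\boldsymbol{L}^{*}\boldsymbol{B}\boldsymbol{L}$ (and then to the two summands of $\boldsymbol{S}$) yields
\begin{equation*}
\inte\ran\boldsymbol{T}=\inte\bigl(\ran\boldsymbol{A}
+\ran(\boldsymbol{M}^{*}\boldsymbol{Q}\boldsymbol{M})
+\ran(\boldsymbol{L}^{*}\boldsymbol{B}\boldsymbol{L})\bigr),
\end{equation*}
so it will suffice to show that one of the three ranges equals $\HHH$.

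Finally I would dispatch the cases. In case~\ref{p:9a}, each of 1/--4/ makes $\partial(\varphi_{i}+\psi_{i})=\partial\varphi_{i}+\nabla\psi_{i}$ surjective: 1/ is direct; 2/ is the classical consequence of supercoercivity (the Fenchel conjugate has full domain, so $\partial(\varphi_{i}+\psi_{i})$ is onto); 3/ reduces to 2/ because a bounded domain trivially gives supercoercivity; and 4/ reduces to 2/ since uniform convexity implies supercoercivity. Hence $\ran\boldsymbol{A}=\HHH$, the sum of ranges is $\HHH$, and $\boldsymbol{0}\in\inte\ran\boldsymbol{T}\subset\ran\boldsymbol{T}$. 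In case~\ref{p:9b}, condition 1/ gives $\ran(\boldsymbol{M}^{*}\boldsymbol{Q}\boldsymbol{M})=\HHH$ directly; and condition 2/ reduces to 1/ because bijectivity of each $M_{i}$ turns $\boldsymbol{M}$ into a bijection $\HHH\to\KKK$, so surjectivity of $\boldsymbol{Q}$ transfers to $\boldsymbol{M}^{*}\boldsymbol{Q}\boldsymbol{M}$. The main obstacle will be pinning down the $3^{*}$ monotonicity of the composite $\boldsymbol{M}^{*}\boldsymbol{Q}\boldsymbol{M}$ in case~\ref{p:9a}, where no $3^{*}$ hypothesis on $\boldsymbol{Q}$ is available, and ensuring that the range-calculus identities chain correctly through the triple sum under the qualification $\boldsymbol{0}\in\sri\boldsymbol{C}$.
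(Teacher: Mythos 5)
Your reduction to $\zer\boldsymbol{T}\neq\emp$, the treatment of the qualification condition via $\boldsymbol{P}=\boldsymbol{A}+\boldsymbol{L}^*\circ\boldsymbol{B}\circ\boldsymbol{L}=\partial(\boldsymbol{\varphi}+\boldsymbol{g}\circ\boldsymbol{L})$, and all of case \ref{p:9b} match the paper. The four sub-conditions of \ref{p:9a} do all collapse to surjectivity of $\partial(\varphi_i+\psi_i)$ as you say. But case \ref{p:9a} itself contains a genuine gap, and it is exactly the obstacle you flag at the end: your claim that $\boldsymbol{M}^*\circ\boldsymbol{Q}\circ\boldsymbol{M}$ is $3^*$ monotone in case \ref{p:9a} ``because it is maximally monotone with full domain (hence locally bounded, which implies $3^*$ monotonicity)'' is false. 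Local boundedness does not imply $3^*$ monotonicity: a nonzero bounded linear skew operator ($S^*=-S$) is maximally monotone, Lipschitzian, and everywhere defined, yet fails to be $3^*$ monotone. This is not a pathological case here --- in Example~\ref{ex:4} the operator $\boldsymbol{Q}$ has precisely such a skew component, so a direct Br\'ezis--Haraux argument on $\boldsymbol{A}+\boldsymbol{M}^*\circ\boldsymbol{Q}\circ\boldsymbol{M}+\boldsymbol{L}^*\circ\boldsymbol{B}\circ\boldsymbol{L}$ cannot work under hypothesis \ref{p:9a}, where no $3^*$ assumption on $\boldsymbol{Q}$ is made. (A secondary issue with the three-way splitting: the intermediate sum $\boldsymbol{M}^*\circ\boldsymbol{Q}\circ\boldsymbol{M}+\boldsymbol{L}^*\circ\boldsymbol{B}\circ\boldsymbol{L}$ need not be maximally monotone, since the stated constraint qualification involves $\dom\boldsymbol{\varphi}$ and does not by itself make $\boldsymbol{L}^*\circ\boldsymbol{B}\circ\boldsymbol{L}$ maximally monotone.)

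The paper closes this gap by dualizing rather than by summing ranges in the primal. Under \ref{p:9a} it first uses Br\'ezis--Haraux on $\boldsymbol{P}=\boldsymbol{A}+\boldsymbol{L}^*\circ\boldsymbol{B}\circ\boldsymbol{L}$ (both summands are $3^*$ monotone and $\ran\boldsymbol{A}=\HHH$) to get $\ran\boldsymbol{P}=\HHH$, and then forms the Attouch--Th\'era-type dual operator $\boldsymbol{R}={-}\boldsymbol{M}\circ\boldsymbol{P}^{-1}\circ({-}\boldsymbol{M}^*)+\boldsymbol{Q}^{-1}$ on $\KKK$. There the roles are reversed: the $3^*$ monotonicity needed is that of ${-}\boldsymbol{M}\circ\boldsymbol{P}^{-1}\circ({-}\boldsymbol{M}^*)$, which follows from Lemma~\ref{l:3m} because $\boldsymbol{P}$ is a subdifferential, while $\boldsymbol{Q}$ only has to contribute $\ran\boldsymbol{Q}^{-1}=\dom\boldsymbol{Q}=\KKK$. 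Surjectivity of $\boldsymbol{R}$ then gives $\zer\boldsymbol{R}\neq\emp$, and \cite[Proposition~26.33(iii)]{Livre1} transfers this back to $\zer\boldsymbol{T}\neq\emp$. To repair your argument you would need to replace the primal range computation in case \ref{p:9a} by such a dual one (or add a $3^*$ hypothesis on $\boldsymbol{Q}$, which would weaken the proposition).
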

\begin{proof}
Let $\boldsymbol{A}$, $\boldsymbol{B}$, and $\boldsymbol{L}$
be as in \eqref{e:3851} and define
\begin{equation}
\label{e:d4y}
\boldsymbol{T}\colon\HHH\to 2^{\HHH}\colon\boldsymbol{x}\mapsto
\boldsymbol{A}\boldsymbol{x}+\boldsymbol{L}^*\big(
\boldsymbol{B}(\boldsymbol{L}\boldsymbol{x})\big)
+\boldsymbol{M}^* \big(\boldsymbol{Q}(
\boldsymbol{M}\boldsymbol{x})\big).
\end{equation}
Suppose that $\widehat{\boldsymbol{x}}\in\zer\boldsymbol{T}$
and set $\widehat{\boldsymbol{u}}^*
=\boldsymbol{Q}(\boldsymbol{M}\widehat{\boldsymbol{x}})$.
On the one hand,
in view of Problem~\ref{prob:1}\ref{prob:1d},
$(\forall i\in I)$ $\widehat{u}_i^*
=\pnabla{i}\boldsymbol{f}_{\!i}
(\boldsymbol{M}\widehat{\boldsymbol{x}})$.
On the other hand, it results from \eqref{e:d4y} that
there exists $\widehat{\boldsymbol{v}}^*\in
\boldsymbol{B}(\boldsymbol{L}\widehat{\boldsymbol{x}})$ such that
${-}\boldsymbol{M}^*\widehat{\boldsymbol{u}}^*
-\boldsymbol{L}^*\widehat{\boldsymbol{v}}^*
\in\boldsymbol{A}\widehat{\boldsymbol{x}}$
or, equivalently, by \eqref{e:8792} and \eqref{e:3851},
$(\forall i\in I)$ ${-}M_i^*\widehat{u}_i^*
-\sum_{k\in K}L_{k,i}^*\widehat{v}_k^*\in
\partial\varphi_i(\widehat{x}_i)+\nabla\psi_i(\widehat{x}_i)$.
Further, using \eqref{e:3851}, we obtain
$(\forall k\in K)$ $\widehat{v}_k^*\in
(\partial g_k+\nabla h_k)(\sum_{j\in I}L_{k,j}\widehat{x}_j)$.
Altogether, we have shown that
$\zer\boldsymbol{T}\neq\emp$ $\Rightarrow$ \eqref{e:7592} holds.
Therefore, it suffices to show that $\zer\boldsymbol{T}\neq\emp$.
To do so, define
\begin{equation}
\begin{cases}
\boldsymbol{\varphi}\colon\HHH\to\RX\colon\boldsymbol{x}\mapsto
\sum_{i\in I}\big(\varphi_i(x_i)+\psi_i(x_i)\big)\\
\boldsymbol{g}\colon\GGG\to\RX\colon\boldsymbol{z}\mapsto
\sum_{k\in K}\big(g_k(z_k)+h_k(z_k)\big)\\
\boldsymbol{P}=\boldsymbol{A}
+\boldsymbol{L}^*\circ\boldsymbol{B}\circ\boldsymbol{L}.
\end{cases}
\end{equation}
Then, by \eqref{e:3851} and \cite[Proposition~16.9]{Livre1},
$\boldsymbol{A}=\partial\boldsymbol{\varphi}$ and
$\boldsymbol{B}=\partial\boldsymbol{g}$.
In turn, since \eqref{e:3038} and \eqref{e:3851} imply that
$\boldsymbol{0}\in\sri\boldsymbol{C}
=\sri\!(\boldsymbol{L}(\dom\boldsymbol{\varphi})
-\dom\boldsymbol{g})$, we derive from
\cite[Theorem~16.47(i)]{Livre1} that
$\boldsymbol{P}=\boldsymbol{A}
+\boldsymbol{L}^*\circ\boldsymbol{B}\circ
\boldsymbol{L}=\partial(\boldsymbol{\varphi}
+\boldsymbol{g}\circ\boldsymbol{L})$.
Therefore, in view of \cite[Theorem~20.25 and
Example~25.13]{Livre1},
\begin{equation}
\label{e:1185}
\text{$\boldsymbol{A}$, $\boldsymbol{B}$, and
$\boldsymbol{P}$ are maximally monotone and $3^*$ monotone}.
\end{equation}

\ref{p:9a}:
Fix temporarily $i\in I$. By \cite[Theorem~20.25]{Livre1},
$\partial(\varphi_i+\psi_i)$ is maximally monotone.
First, if \ref{p:9a}\ref{p:9a2} holds, then
\cite[Corollary~16.30, and Propositions~14.15 and 16.27]{Livre1}
entail that $\ran\partial(\varphi_i+\psi_i)
=\dom\partial(\varphi_i+\psi_i)^*=\HH_i$ and, hence,
\ref{p:9a}\ref{p:9a1} holds. Second,
if \ref{p:9a}\ref{p:9a3} holds, then
$\dom\partial(\varphi_i+\psi_i)\subset\dom\!(\varphi_i+\psi_i)
=\dom\varphi_i$ is bounded and, therefore, it follows from
\cite[Corollary~21.25]{Livre1} that \ref{p:9a}\ref{p:9a1} holds.
Finally, if \ref{p:9a}\ref{p:9a4} holds, then
\cite[Proposition~17.26(ii)]{Livre1} implies that
\ref{p:9a}\ref{p:9a2} holds and, in turn, that 
\ref{p:9a}\ref{p:9a1} holds.
Altogether, it is enough to assume that the operators
$(\partial(\varphi_i+\psi_i))_{i\in I}$
are surjective and to show that $\zer\boldsymbol{T}\neq\emp$.
Assume that $(\partial(\varphi_i+\psi_i))_{i\in I}$
are surjective and set
$\boldsymbol{R}={-}\boldsymbol{M}\circ\boldsymbol{P}^{-1}
\circ({-}\boldsymbol{M}^*)+\boldsymbol{Q}^{-1}$.
Then we derive from \eqref{e:3851} that
$\boldsymbol{A}$ is surjective.
On the other hand, Lemma~\ref{l:3m} asserts that
$\boldsymbol{L}^*\circ\boldsymbol{B}\circ\boldsymbol{L}$
is $3^*$ monotone. Hence, \eqref{e:1185} and
\cite[Corollary~25.27(i)]{Livre1} yields
$\dom\boldsymbol{P}^{-1}=\ran\boldsymbol{P}=\HHH$.
In turn, since $\boldsymbol{P}^{-1}$ and $\boldsymbol{Q}^{-1}$
are maximally monotone, \cite[Theorem~25.3]{Livre1} implies that
$\boldsymbol{R}$ is likewise. Furthermore, we observe that
$\dom\boldsymbol{Q}^{-1}\subset\KKK
=\dom\!({-}\boldsymbol{M}\circ\boldsymbol{P}^{-1}
\circ({-}\boldsymbol{M}^*))$ and,
by virtue of \eqref{e:1185}, \cite[Proposition~25.19(i)]{Livre1},
and Lemma~\ref{l:3m}, that
${-}\boldsymbol{M}\circ\boldsymbol{P}^{-1}
\circ({-}\boldsymbol{M}^*)$ is $3^*$ monotone.
Therefore, since
$\ran\boldsymbol{Q}^{-1}=\dom\boldsymbol{Q}=\KKK$,
\cite[Corollary~25.27(ii)]{Livre1} entails that
$\boldsymbol{R}$ is surjective and, in turn, that
$\zer\boldsymbol{R}\neq\emp$.
Consequently, \cite[Proposition~26.33(iii)]{Livre1}
asserts that $\zer\boldsymbol{T}\neq\emp$.

\ref{p:9b}\ref{p:9b1}:
Lemma~\ref{l:3m} asserts that
$\boldsymbol{M}^*\circ\boldsymbol{Q}\circ\boldsymbol{M}$
is $3^*$ monotone. At the same time,
since $\boldsymbol{Q}$ is maximally monotone and
$\dom\boldsymbol{Q}=\KKK$, it results from \eqref{e:1185} and
\cite[Theorem~25.3]{Livre1} that $\boldsymbol{T}=\boldsymbol{P}
+\boldsymbol{M}^*\circ\boldsymbol{Q}\circ\boldsymbol{M}$
is maximally monotone. Hence, since
$\boldsymbol{M}^*\circ\boldsymbol{Q}\circ\boldsymbol{M}$
is surjective, we derive from \eqref{e:1185} and
\cite[Corollary~25.27(i)]{Livre1} that $\boldsymbol{T}$
is surjective and, therefore, that
$\zer\boldsymbol{T}\neq\emp$.

\ref{p:9b}\ref{p:9b2}$\Rightarrow$\ref{p:9b}\ref{p:9b1}:
Since the assumption implies that $\boldsymbol{M}$ is bijective,
so is $\boldsymbol{M}^*$. This makes
$\boldsymbol{M}^*\circ\boldsymbol{Q}\circ\boldsymbol{M}$
surjective.
\end{proof}

\begin{remark}
Sufficient conditions for $\boldsymbol{0}\in\sri\boldsymbol{C}$
to hold in Proposition~\ref{p:9} can be found in
\cite[Proposition~5.3]{Siop13}.
\end{remark}

\section{Application examples}
\label{sec:4}

We discuss problems which are shown to be realizations
of Problem~\ref{prob:1} and which can therefore be solved by the
asynchronous block-iterative algorithm \eqref{e:dai} of
Theorem~\ref{t:1}.

\begin{example}[quadratic coupling]
\label{ex:9}
Let $\KK$ be a real Hilbert space and
let $I$ be a nonempty finite set. For every $i\in I$,
let $\HH_i$ be a real Hilbert space,
let $\varphi_i\in\Gamma_0(\HH_i)$,
let $\alpha_i\in\RP$,
let $\psi_i\colon\HH_i\to\RR$ be convex and differentiable
with an $\alpha_i$-Lipschitzian gradient,
let $M_i\colon\HH_i\to\KK$ be linear and bounded,
let $\Lambda_i$ be a nonempty finite set,
let $(\omega_{i,\ell,j})_{\ell\in\Lambda_i,j\in
I\smallsetminus\{i\}}$ be in $\RP$,
and let $(\kappa_{i,\ell})_{\ell\in\Lambda_i}$ be in $\RPP$.
Additionally, set $\HHH=\bigoplus_{i\in I}\HH_i$
and $\KKK=\bigoplus_{i\in I}\KK$. The problem is to
\begin{multline}
\label{e:nash4}
\text{find}\;\:\overline{\boldsymbol{x}}\in\HHH\;\:
\text{such that}\\(\forall i\in I)\;\;
\overline{x}_i\in\Argmind{x_i\in\HH_i}{\varphi_i(x_i)+\psi_i(x_i)
+\sum_{\ell\in\Lambda_i}\frac{\kappa_{i,\ell}}{2}\Bigg\|
M_ix_i-\sum_{j\in I\smallsetminus \{i\}}
\omega_{i,\ell,j}M_j\overline{x}_j\Bigg\|^2}.
\end{multline}
It is assumed that
\begin{equation}
\label{e:2096}
(\forall\boldsymbol{y}\in\KKK)(\forall\boldsymbol{y}'\in\KKK)\quad
\sum_{i\in I}\sum_{\ell\in\Lambda_i}\kappa_{i,\ell}
\Scal{y_i-y_i'}{y_i-y_i'-\sum_{j\in I\smallsetminus\{i\}}
\omega_{i,\ell,j}(y_j-y_j')}\geq 0.
\end{equation}
Define
\begin{equation}
\label{e:2287}
(\forall i\in I)\quad\boldsymbol{f}_{\!i}\colon\KKK\to\RR\colon
\boldsymbol{y}\mapsto
\sum_{\ell\in\Lambda_i}\frac{\kappa_{i,\ell}}{2}\Bigg\|
y_i-\sum_{j\in I\smallsetminus \{i\}}
\omega_{i,\ell,j}y_j\Bigg\|^2.
\end{equation}
Then, for every $i\in I$ and every $\boldsymbol{y}\in\KKK$,
$\boldsymbol{f}_{\!i}(\mute;\boldsymbol{y}_{\smallsetminus i})$
is convex and differentiable with
\begin{equation}
\pnabla{i}\boldsymbol{f}_{\!i}(\boldsymbol{y})
=\sum_{\ell\in\Lambda_i}\kappa_{i,\ell}\Bigg(
y_i-\sum_{j\in I\smallsetminus \{i\}}
\omega_{i,\ell,j}y_j\Bigg).
\end{equation}
Hence, in view of \eqref{e:2096}, the operator
$\boldsymbol{Q}\colon\KKK\to\KKK\colon
\boldsymbol{y}\mapsto
(\pnabla{i}\boldsymbol{f}_{\!i}(\boldsymbol{y}))_{i\in I}$
is monotone and Lipschitzian.
Thus, \eqref{e:nash4} is a special case of \eqref{e:nash}
with $(\forall i\in I)$ $\KK_i=\KK$ and
$(\forall k\in K)$ $g_k=h_k=0$.
In particular, suppose that, for every $i\in I$, $\HH_i=\KK$,
$C_i$ is a nonempty closed convex subset of $\HH_i$,
$\varphi_i=\iota_{C_i}$, $\psi_i=0$, $M_i=\Id$,
$\Lambda_i\subset I\smallsetminus\{i\}$, and
\begin{equation}
(\forall\ell\in\Lambda_i)\quad
\begin{cases}
\kappa_{i,\ell}=1\\
(\forall j\in I\smallsetminus\{i\})\;\;\omega_{i,\ell,j}=
\begin{cases}
1,&\text{if}\;\:j=\ell;\\
0,&\text{if}\;\:j\neq\ell.
\end{cases}
\end{cases}
\end{equation}
Then \eqref{e:nash4} becomes
\begin{equation}
\label{e:nash14}
\text{find}\;\:\overline{\boldsymbol{x}}\in\HHH\;\:
\text{such that}\;\:(\forall i\in I)\;\;
\overline{x}_i\in\Argmind{x_i\in C_i}{\frac{1}{2}
\sum_{\ell\in\Lambda_i}\|x_i-\overline{x}_{\ell}\|^2}.
\end{equation}
This unifies models found in \cite{Bail12}.
\end{example}

\begin{example}[minimax]
\label{ex:4}
Let $I$ be a finite set and suppose that $\emp\neq J\subset I$.
Let $(\HH_i)_{i\in I}$ be real Hilbert spaces, and set
$\UUU=\bigoplus_{i\in I\smallsetminus J}\HH_i$ and
$\VVV=\bigoplus_{j\in J}\HH_j$.
For every $i\in I$, let $\varphi_i\in\Gamma_0(\HH_i)$,
let $\alpha_i\in\RP$, let $\psi_i\colon\HH_i\to\RR$ be convex and
differentiable with an $\alpha_i$-Lipschitzian gradient.
Further, let
$\boldsymbol{\EuScript{L}}\colon\UUU\oplus\VVV\to\RR$ be
differentiable with a Lipschitzian gradient and 
such that, for every $\boldsymbol{u}\in\UUU$ and every
$\boldsymbol{v}\in\VVV$, the functions
${-}\boldsymbol{\EuScript{L}}(\boldsymbol{u},\mute)$ and 
$\boldsymbol{\EuScript{L}}(\mute,\boldsymbol{v})$ are
convex. Finally, for every $i\in I\smallsetminus J$ and every 
$j\in J$, let $L_{j,i}\colon\HH_i\to\HH_j$ be linear and bounded.
Consider the multivariate minimax problem
\begin{equation}
\label{e:mima}
\minmax{\boldsymbol{u}\in\UUU}{\boldsymbol{v}\in\VVV}{
\sum_{i\in I\smallsetminus J}\big(\varphi_i(u_i)+\psi_i(u_i)\big)
-\sum_{j\in J}\big(\varphi_j(v_j)+\psi_j(v_j)\big)
+\boldsymbol{\EuScript{L}}(\boldsymbol{u},\boldsymbol{v})
+\sum_{i\in I\smallsetminus J}\sum_{j\in J}\scal{L_{j,i}u_i}{v_j}
}.
\end{equation}
Now set $\HHH=\UUU\oplus\VVV$ and define
\begin{equation}
(\forall i\in I)\quad
\boldsymbol{f}_{\!i}\colon\HHH\to\RR\colon
(\boldsymbol{u},\boldsymbol{v})\mapsto
\begin{cases}
\boldsymbol{\EuScript{L}}(\boldsymbol{u},\boldsymbol{v})
+\sscal{u_i}{\sum_{j\in J}L_{j,i}^*v_j},
&\text{if}\;\:i\in I\smallsetminus J;\\
{-}\boldsymbol{\EuScript{L}}(\boldsymbol{u},\boldsymbol{v})
-\sscal{\sum_{k\in I\smallsetminus J}L_{i,k}u_k}{v_i},
&\text{if}\;\:i\in J.
\end{cases}
\end{equation}
Then $\HHH=\bigoplus_{i\in I}\HH_i$ and \eqref{e:mima} can be put
in the form
\begin{equation}
\text{find}\;\:\overline{\boldsymbol{x}}\in\HHH\;\:
\text{such that}\;\:(\forall i\in I)\;\;
\overline{x}_i\in\Argmind{x_i\in\HH_i}{
\varphi_i(x_i)+\psi_i(x_i)+\boldsymbol{f}_{\!i}(x_i;
\overline{\boldsymbol{x}}_{\smallsetminus i})}.
\end{equation}
Let us verify Problem~\ref{prob:1}\ref{prob:1d}.
On the one hand, we have
\begin{equation}
\label{e:78}
(\forall i\in I)(\forall\boldsymbol{x}\in\HHH)\quad
\pnabla{i}\boldsymbol{f}_{\!i}(\boldsymbol{x})=
\begin{cases}
\pnabla{i}\boldsymbol{\EuScript{L}}(\boldsymbol{x})
+\sum_{j\in J}L_{j,i}^*x_j,
&\text{if}\;\:i\in I\smallsetminus J;\\
{-}\pnabla{i}\boldsymbol{\EuScript{L}}(\boldsymbol{x})
-\sum_{k\in I\smallsetminus J}L_{i,k}x_k,
&\text{if}\;\:i\in J.
\end{cases}
\end{equation}
On the other hand, the operator
\begin{equation}
\boldsymbol{R}\colon\HHH\to\HHH\colon
\boldsymbol{x}\mapsto
\big(\big(\pnabla{i}\boldsymbol{\EuScript{L}}(\boldsymbol{x})
\big)_{i\in I\smallsetminus J},
\big({-}\pnabla{j}\boldsymbol{\EuScript{L}}
(\boldsymbol{x})\big)_{j\in J}\big)
\end{equation}
is monotone \cite{Rock70,Roc71d} and Lipschitzian,
while the bounded linear operator
\begin{equation}
\boldsymbol{S}\colon\HHH\to\HHH\colon\boldsymbol{x}\mapsto
\Bigg(
\Bigg(\Sum_{j\in J}L_{j,i}^*x_j\Bigg)_{i\in I\smallsetminus J},
\Bigg({-}\Sum_{k\in I\smallsetminus J}L_{i,k}x_k\Bigg)_{i\in J}
\Bigg)
\end{equation}
satisfies $\boldsymbol{S}^*={-}\boldsymbol{S}$
and it is therefore monotone
\cite[Example~20.35]{Livre1}.
Hence, since the operator $\boldsymbol{Q}$ in
Problem~\ref{prob:1}\ref{prob:1d} can be written as
$\boldsymbol{Q}=\boldsymbol{R}+\boldsymbol{S}$,
it is therefore monotone and Lipschitzian.
Altogether, \eqref{e:mima} is an instantiation of \eqref{e:nash}.
Special cases of \eqref{e:mima} can be found in 
\cite{Sign20,Rock95}.
\end{example}

\begin{example}
\label{ex:31}
In Problem~\ref{prob:1}, consider the following scenario: 
$K=\{1\}$, $\GG_1$ is the standard Euclidean space $\RR^M$,
$r\in\GG_1$, $g_1=\iota_{E}$, where $E=r+\RP^M$,
$h_1=0$, and, for every $i\in I$, $\HH_i$ is the standard Euclidean
space $\RR^{N_i}$, $\psi_i=0$, and 
$\varphi_i=\iota_{C_i}$, where $C_i$ is a nonempty closed convex
subset of $\HH_i$. Then, upon setting $N=\sum_{i\in I}N_i$,
we obtain the model 
\begin{equation}
\label{e:nash8}
\text{find}\;\:\overline{\boldsymbol{x}}\in\RR^N\;\:
\text{such that}\;\:(\forall i\in I)\;\;
\overline{x}_i\in\Argmind{x_i\in C_i\\
L_{1,i}x_i+\boldsymbol{L}_{1,\smallsetminus i}
\overline{\boldsymbol{x}}_{\smallsetminus i}\in E}
{\boldsymbol{f}_{\!i}(x_i;
\overline{\boldsymbol{x}}_{\smallsetminus i})
}
\end{equation}
investigated in \cite{YiPa19}.
\end{example}

\begin{example}[minimization]
\label{ex:3}
Consider the setting of Problem~\ref{prob:1} where 
\ref{prob:1d} is replaced by
\begin{enumerate}[label={\rm[\alph*']}]
\setcounter{enumi}{1}
\item
For every $i\in I$, $\boldsymbol{f}_{\!i}=\boldsymbol{f}$,
where $\boldsymbol{f}\colon\KKK\to\RR$ is a differentiable convex 
function such that $\boldsymbol{Q}=\nabla\boldsymbol{f}$ is
Lipschitzian, 
\end{enumerate}
and, in addition, the following is satisfied:
\begin{enumerate}[label={\rm[\alph*]}]
\setcounter{enumi}{4}
\item
\label{prob:1e}
For every $k\in K$, $g_k\colon\GG_k\to\RR$ is G\^ateaux
differentiable.
\end{enumerate}
Then \eqref{e:nash} reduces to the multivariate minimization
problem 
\begin{equation}
\label{e:6887}
\minimize{\boldsymbol{x}\in\HHH}{
\sum_{i\in I}\big(\varphi_i(x_i)+\psi_i(x_i)\big)
+\boldsymbol{f}(\boldsymbol{M}\boldsymbol{x})
+\sum_{k\in K}(g_k+h_k)\Bigg(\sum_{j\in I}L_{k,j}x_j\Bigg)}.
\end{equation}
The only asynchronous block-iterative algorithm we know of to solve
\eqref{e:6887} is \cite[Algorithm~4.5]{Sadd20}, which is based on
different decomposition principles. Special cases of \eqref{e:6887}
are found in partial differential equations \cite{Atto08}, machine
learning \cite{Bric19}, and signal recovery \cite{Bric09}, where 
they were solved using synchronous and non block-iterative methods.
\end{example}


\begin{thebibliography}{99}
\setlength{\itemsep}{0pt}
\small

\bibitem{Atto08}
H. Attouch, J. Bolte, P. Redont, and A. Soubeyran,
Alternating proximal algorithms for weakly coupled convex 
minimization problems. Applications to dynamical games and PDE's,
{\em J. Convex Anal.},
vol. 15, pp. 485--506, 2008.

\bibitem{Bail12}
J.-B. Baillon, P. L. Combettes, and R. Cominetti,
There is no variational characterization of the cycles in the
method of periodic projections,
{\em J. Funct. Anal.},
vol. 262, pp. 400--408, 2012.

\bibitem{Livre1} 
H. H. Bauschke and P. L. Combettes, 
{\em Convex Analysis and Monotone Operator Theory in Hilbert 
Spaces}, 2nd ed. Springer, New York, 2017.

\bibitem{Bens72}
A. Bensoussan, J.-L. Lions, and R. Temam, 
Sur les m\'ethodes de d\'ecomposition, de d\'ecentralisation et de
coordination, et applications,
{\em Cahier IRIA}, 
no. 11, pp. 5--189, 1972.

\bibitem{Brav18}
M. Bravo, D. Leslie, and P. Mertikopoulos, 
Bandit learning in concave $N$-person games,
{\em Proc. Adv. Neural Inform. Process. Syst. Conf.}, 
vol. 31, pp. 5661--5671, 2018.

\bibitem{Bric19}
L. M. Brice\~no-Arias, G. Chierchia, E. Chouzenoux,
and J.-C. Pesquet,
A random block-coordinate Douglas--Rachford splitting method
with low computational complexity for binary logistic regression,
{\em Comput. Optim. Appl.},
vol. 72, pp. 707--726, 2019.

\bibitem{Bric09}
L. M. Brice\~no-Arias and P. L. Combettes,
Convex variational formulation with smooth coupling for 
multicomponent signal decomposition and recovery, 
{\em Numer. Math. Theory Methods Appl.},
vol. 2, pp. 485--508, 2009.

\bibitem{Bric13}
L. M. Brice\~{n}o-Arias and P. L. Combettes, 
Monotone operator methods for Nash equilibria in non-potential
games,
in: {\em Computational and Analytical Mathematics},
(D. Bailey et al., eds.), pp. 143--159.
Springer, New York, 2013.

\bibitem{Jmaa20}
M. N. B\`ui and P. L. Combettes,
Warped proximal iterations for monotone inclusions,
{\em J. Math. Anal. Appl.},
vol. 491, art. 124315, 21 pp., 2020.

\bibitem{Sadd20}
M. N. B\`ui and P. L. Combettes,
Multivariate monotone inclusions in saddle form.\\
\url{https://arxiv.org/abs/2002.06135}

\bibitem{Cohe87}
G. Cohen,
Nash equilibria: Gradient and decomposition algorithms,
{\em Large Scale Syst.},
vol. 12, pp. 173--184, 1987.

\bibitem{Siop13} 
P. L. Combettes, 
Systems of structured monotone inclusions:
Duality, algorithms, and applications,
{\em SIAM J. Optim.}, 
vol. 23, pp. 2420--2447, 2013.

\bibitem{MaPr18}
P. L. Combettes and J. Eckstein,
Asynchronous block-iterative primal-dual decomposition methods
for monotone inclusions,
{\em Math. Program.},
vol. B168, pp. 645--672, 2018.

\bibitem{Sign20}
P. L. Combettes and J.-C. Pesquet,
Fixed point strategies in data science.\\
\url{https://arxiv.org/abs/2008.02260}

\bibitem{Comi12}
R. Cominetti, F. Facchinei, and J. B. Lasserre,
{\em Modern Optimization Modelling Techniques}.
Birkh\"auser, Basel, 2012.

\bibitem{Facc07}
F. Facchinei, A. Fischer, and V. Piccialli,
On generalized Nash games and variational inequalities,
{\em Oper. Res. Lett.},
vol. 35, pp. 159--164, 2007.

\bibitem{Heus09}
A. von Heusinger and C. Kanzow, 
Relaxation methods for generalized Nash equilibrium problems with 
inexact line search,
{\em J. Optim. Theory Appl.},
vol. 143, pp. 159--183, 2009. 

\bibitem{Hoda10}
S. Hoda, A. Gilpin, J. Pe\~{n}a, and T. Sandholm,
Smoothing techniques for computing Nash equilibria of sequential
games,
{\em Math. Oper. Res.},
vol. 35, pp. 494--512, 2010.

\bibitem{Kann12}
A. Kannan and U. V. Shanbhag, 
Distributed computation of equilibria in monotone Nash games
via iterative regularization techniques,
{\em SIAM J. Optim.},
vol. 22, pp. 1177--1205, 2012.

\bibitem{Nash50}
J. F. Nash, Jr., 
Equilibrium points in $n$-person games,
{\em Proc. Nat. Acad. Sci. USA},
vol. 36, pp. 48--49, 1950.

\bibitem{Nash51}
J. Nash,
Non-cooperative games,
{\em Ann. Math.},
vol. 54, pp. 286--295, 1951.

\bibitem{Rock70} 
R. T. Rockafellar, 
Monotone operators associated with saddle-functions and minimax
problems, in:
{\em Nonlinear Functional Analysis, Part 1}, 
(F. E. Browder, ed.), pp. 241--250.
AMS, Providence, RI, 1970.

\bibitem{Roc71d} 
R. T. Rockafellar, 
Saddle-points and convex analysis, 
in: {\em Differential Games and Related Topics},
(H. W. Kuhn and G. P. Szeg\"o, eds.), pp. 109--127.
North-Holland, Amsterdam, 1971.

\bibitem{Rock95}
R. T. Rockafellar,
Monotone relations and network equilibrium, in:
{\em Variational Inequalities and Network Equilibrium Problems},
(F. Giannessi and A. Maugeri, eds.), pp. 271--288.
Plenum Press, New York, 1995.

\bibitem{YiPa19}
P. Yi and L. Pavel,
An operator splitting approach for distributed generalized Nash 
equilibria computation,
{\em Automatica},
vol. 102, pp. 111--121, 2019.

\end{thebibliography}
\end{document}